\numberwithin{equation}{section}
\newtheorem{theorem}{Theorem}
\newtheorem{proposition}[theorem]{Proposition}%
\newtheorem{example}{Example}%
\newtheorem{remark}{Remark}%
\newtheorem{definition}{Definition}%
\newtheorem{lemma}[theorem]{Lemma}
\begin{document}

\title[ Algebraic and Graphical Analysis of H-Toeplitz operators on Fock space]{ Algebraic and Graphical Analysis of H-Toeplitz operators on Fock space}

\author[Th. Sonamani Singh, M. P Singh, O. Nilbir Singh and Kh. Priyobarta Singh]{Thokchom Sonamani Singh, Moirangthem Premjit Singh, Oinam Nilbir Singh and Khumballambam Priyobarta Singh}

\address{Prof. M. Premjit Singh,\\Department of Mathematics, \\
Manipur University, Imphal \\ India}
\email{mpremjitmu@gmail.com}

\maketitle

\begin{abstract}
This paper presents a comprehensive study of H-Toeplitz operators on the Fock space, a class of operators that synthesizes structural elements of both Toeplitz and Hankel operators. We derive explicit matrix representations for these operators with respect to the standard orthonormal basis of monomials, providing a foundational tool for their analysis. Central to our investigation are the algebraic and spectral properties of these operators. We establish precise conditions for commutativity, particularly for operators with harmonic symbols, and prove that non-zero H-Toeplitz operators cannot be Hilbert-Schmidt. Furthermore, we develop a Mellin transform-based framework to characterize the hyponormality and normality of operators with quasi-homogeneous symbols, deriving verifiable analytical criteria. Finally, we introduce the novel concept of directed H-Toeplitz graphs to visualize the adjacency relations encoded by the matrix structure of these operators. This graphical representation reveals distinct and interpretable patterns in indegree and outdegree sequences, offering a new combinatorial lens through which to understand their structure. Our results forge a significant connection between the analytic theory of operators on function spaces and the discrete structures of graph theory, enriching both fields.
\end{abstract}

\keywords Toeplitz Operator, Hankel Operator, H-Toeplitz Operator, Commutativity, Compactness, Hyponormality, Normality, H-Toeplitz Graphs, Fock Space.

\tableofcontents

\2010mathclass{47B35, 32A25.}

\section{Introduction}\label{sec1}
Let $\alpha > 0$ be a fixed parameter, and consider the Gaussian measure $d\lambda_\alpha$ on $\mathbb{C}$ defined by  
\[
d\lambda_\alpha(z) = \frac{\alpha}{\pi} e^{-\alpha |z|^2} \, dA(z),  
\]  
where $dA$ denotes the Lebesgue area measure. For $0 < p < \infty$, the Fock space $F_\alpha^p$ consists of all entire functions $f$ such that  
\[
\|f\|_p = \left( \frac{p\alpha}{2\pi} \int_{\mathbb{C}} |f(z)|^p e^{-\frac{p\alpha}{2}|z|^2} \, dA(z) \right)^{1/p} < \infty.  
\]  
When $p = 2$, $F_\alpha^2$ is a reproducing kernel Hilbert space with inner product  
\[
\langle f, g \rangle = \int_{\mathbb{C}} f(z) \overline{g(z)} \, d\lambda_\alpha(z),  
\]  
and reproducing kernel $K_z(w) = e^{\alpha \overline{z} w}$. The orthogonal projection $P: L^2(\mathbb{C}, d\lambda_\alpha) \to F_\alpha^2$ is given by  
\[
Pf(z) = \int_{\mathbb{C}} f(w) e^{\alpha \overline{z} w} \, d\lambda_\alpha(w).  
\]  

Toeplitz and Hankel operators on $F_\alpha^2$ are defined in analogy with their classical counterparts on Hardy and Bergman spaces. Given a symbol $\varphi \in L^\infty(\mathbb{C})$, the Toeplitz operator $T_\varphi$ and Hankel operator $H_\varphi$ are given by  
\[
T_\varphi f = P(\varphi f), \quad H_\varphi f = PM_\varphi J(f).  
\]

The study of Toeplitz and Hankel operators has been a central theme in operator theory and functional analysis, with significant developments occurring across various function spaces. In particular, the Hardy and Bergman spaces have served as natural settings for investigating these operators and their generalizations. The introduction of H-Toeplitz operators by Arora and Paliwal \cite{ar} marked an important advancement, unifying aspects of both Toeplitz and Hankel operators while exhibiting novel structural properties.
On the Hardy space, Arora and Paliwal established a comprehensive framework for H-Toeplitz operators, characterizing their partial isometric, co-isometric, Hilbert-Schmidt, and hyponormal properties. Notably, they demonstrated that every H-Toeplitz operator is unitarily equivalent to a direct sum of a Toeplitz operator and a Hankel operator, revealing a deep connection between these classes. This structural insight underscores the significance of H-Toeplitz operators, as they bridge the gap between Toeplitz and Hankel operators while introducing new analytical phenomena.

The investigation of H-Toeplitz operators was later extended to the Bergman space, where their properties were systematically explored. Gupta and Singh \cite{gu} initiated this study, examining fundamental aspects such as compactness, Fredholmness, and commutativity. 
Hyponormality, a key subnormality condition where $[T^*,T] \geq 0$, emerged as a focal point. Gupta and Aggarwal \cite{gupta2024} showed that sums of Toeplitz operators with non-harmonic monomial symbols are hyponormal under specific conditions, while Aggarwal and Gupta \cite{aggarwal2025} extended this to quasi-homogeneous symbols, proving rigidity for positive-degree cases and polynomial coefficients.Subsequent work by Kim and Lee \cite{kim} provided criteria for contractivity and expansivity, further enriching the understanding of these operators. Additionally, Liang et al. \cite{liang} explored commutativity conditions for H-Toeplitz operators with quasi-homogeneous symbols, while Ding and Chen \cite{ding} characterized the conditions under which products of H-Toeplitz operators retain the H-Toeplitz structure.
Parallel developments in the study of Toeplitz and Hankel operators on the Fock space \cite{cho, str, xu, ful,hu} have contributed to a broader understanding of operator theory in analytic function spaces.

In this paper, we develop the theory of H-Toeplitz operators on the Fock space $F_{\alpha}^{2}$. While the matrix representations of standard Toeplitz and Hankel operators on Fock spaces are known, a detailed analysis of H-Toeplitz operators in this setting, including their explicit matrix forms, algebraic properties, and a groundbreaking graphical interpretation remains unexplored. We bridge this gap by providing a complete framework for their analysis. Our work goes beyond prior results in several key aspects: we establish conditions for commutativity; prove the non-existence of non-zero Hilbert-Schmidt H-Toeplitz operators; and develop a novel Mellin transform approach to fully characterize the hyponormality and normality of operators with quasi-homogeneous symbols, providing explicit, verifiable criteria. Furthermore, we introduce a new graphical model by defining directed H-Toeplitz graphs whose structure directly reflects the analytic and anti-analytic components of the operator's symbol, creating a powerful visual and combinatorial tool for understanding these operators. This synthesis of operator algebra, spectral theory, and graph theory provides profound new insights into the structure of H-Toeplitz operators on the Fock space.

\section{Preliminaries}\label{sec2}
\begin{definition}
Let \(\phi\) be a measurable function. Define the multiplication operator \(M_\phi\) on \(L^2(\mathbb{C}, d\lambda_\alpha)\) by
\[
M_\phi(f) = \phi f \quad \text{for all } f \in L^2(\mathbb{C}, d\lambda_\alpha).
\]
\end{definition}

\begin{definition}
\cite{josh}
Given a bounded measurable function \(\phi \in L^\infty(\mathbb{C})\), the Toeplitz operator \(T_\phi\) on \(F^2_\alpha\) with symbol \(\phi\) is defined by
\[
T_\phi(f) = P(\phi f), \quad \text{for all } f \in F^2_\alpha.
\]
It is easy to verify that \(T_\phi\) is a bounded linear operator on \(F^2_\alpha\), satisfying the norm estimate
\[
\lVert T_\phi f \rVert \leq \lVert \phi \rVert_\infty \lVert f \rVert, \quad \text{for all } f \in F^2_\alpha,
\]
which implies \(\lVert T_\phi \rVert \leq \lVert \phi \rVert_\infty\).
\end{definition}

\begin{definition}
For a bounded measurable symbol \(\phi \in L^\infty(\mathbb{C})\), the Hankel operator \(H_\phi\) on \(F^2_\alpha\) is defined by
\[
H_\phi(f) = P M_\phi J(f), \quad \text{for all } f \in F^2_\alpha,
\]
where \(J : F^2_\alpha \to F^2_\alpha\) is the so-called \emph{flip operator}, acting on the orthonormal basis \(\{e_n\}\) by
\[
J(e_n) = \overline{e_{n+1}}, \quad \text{for } n = 0, 1, 2, \ldots
\]
\end{definition} 

\begin{lemma}\label{lem:monomial-orthogonality}
In the Fock space $F_\alpha^2$, the monomials $\{z^n\}_{n=0}^\infty$ form an orthogonal system with
\[
\langle z^s, z^t \rangle = \delta_{st} \frac{s!}{\alpha^s},
\]
where $\delta_{st}$ denotes the Kronecker delta.
\end{lemma}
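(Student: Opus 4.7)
The plan is to evaluate the inner product directly from its integral definition and reduce the computation to a one-dimensional Gaussian-type integral by switching to polar coordinates. The orthogonality part will fall out of the angular integral, and the norm computation will reduce to a standard gamma function identity.

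First I would write
\[
\langle z^s, z^t \rangle = \frac{\alpha}{\pi} \int_{\mathbb{C}} z^s \overline{z}^{\,t} \, e^{-\alpha|z|^2} \, dA(z),
\]
substitute $z = re^{i\theta}$ with $dA(z) = r\,dr\,d\theta$, and split the integral as
\[
\langle z^s, z^t \rangle = \frac{\alpha}{\pi} \left(\int_0^{2\pi} e^{i(s-t)\theta}\, d\theta\right)\left(\int_0^\infty r^{s+t+1} e^{-\alpha r^2}\, dr\right).
\]
The angular factor equals $2\pi\,\delta_{st}$, which immediately produces the Kronecker delta and handles the orthogonality claim with no further work.

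For the $s=t$ case I would substitute $u = \alpha r^2$, so that $du = 2\alpha r\,dr$ and $r^{2s} = (u/\alpha)^s$, reducing the radial integral to
\[
\int_0^\infty r^{2s+1} e^{-\alpha r^2}\, dr = \frac{1}{2\alpha^{s+1}} \int_0^\infty u^s e^{-u}\, du = \frac{s!}{2\alpha^{s+1}},
\]
via $\Gamma(s+1) = s!$. Multiplying by the prefactor $\frac{\alpha}{\pi}\cdot 2\pi = 2\alpha$ yields the stated value $s!/\alpha^s$.

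No step here is truly an obstacle; the computation is routine. The only thing to be careful about is bookkeeping of the normalizing constant $\alpha/\pi$ in $d\lambda_\alpha$ together with the Jacobian $r$ from polar coordinates, since an off-by-factor error there would spoil the clean formula $s!/\alpha^s$. A brief verification in the base case $s=0$, where $\langle 1,1\rangle = 1$, serves as a useful sanity check that the constants have been tracked correctly.
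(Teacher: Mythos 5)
Your proposal is correct and follows exactly the same route as the paper's proof: polar coordinates, the angular integral $\int_0^{2\pi} e^{i(s-t)\theta}\,d\theta = 2\pi\delta_{st}$ for orthogonality, and the substitution $u=\alpha r^2$ reducing the radial integral to $\Gamma(s+1)/(2\alpha^{s+1})$. The constant bookkeeping $\frac{\alpha}{\pi}\cdot 2\pi \cdot \frac{s!}{2\alpha^{s+1}} = \frac{s!}{\alpha^s}$ also matches the paper.
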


\begin{proof}
Using polar coordinates $z = re^{i\theta}$ and the Gaussian measure $d\mu(z) = \frac{\alpha}{\pi} e^{-\alpha |z|^2} dA(z)$, we compute:
\begin{align*}
\langle z^s, z^t \rangle &= \frac{\alpha}{\pi} \int_{\mathbb{C}} z^s \overline{z}^t e^{-\alpha |z|^2} dA(z) \\
&= \frac{\alpha}{\pi} \int_0^{2\pi} e^{i(s-t)\theta} d\theta \int_0^\infty r^{s+t+1} e^{-\alpha r^2} dr.
\end{align*}

The angular integral vanishes when $s \neq t$:
\[
\int_0^{2\pi} e^{i(s-t)\theta} d\theta = 2\pi \delta_{st}.
\]

For the radial integral when $s = t$, we make the substitution $u = \alpha r^2$:
\begin{align*}
\int_0^\infty r^{2s+1} e^{-\alpha r^2} dr &= \frac{1}{2\alpha^{s+1}} \int_0^\infty u^s e^{-u} du \\
&= \frac{\Gamma(s+1)}{2\alpha^{s+1}} = \frac{s!}{2\alpha^{s+1}}.
\end{align*}

Combining these results yields
\[
\langle z^s, z^t \rangle = \delta_{st} \cdot \frac{\alpha}{\pi} \cdot 2\pi \cdot \frac{s!}{2\alpha^{s+1}} = \delta_{st} \frac{s!}{\alpha^s}. 
\]
\end{proof}
\begin{lemma}\label{lem:projection-monomials}
Let $k, \ell \in \mathbb{N}_0$ be non-negative integers. The orthogonal projection $P$ in $F_\alpha^2$ satisfies:
\begin{enumerate}[label=(\alph*)]
    \item For $k \geq \ell$, the projection is given by
    \[
    P(z^{n+k}\overline{z}^\ell) = \frac{(n+k)!}{\alpha^{n+k} }\sqrt{\frac{\alpha^{n+k-\ell}}{(n+k-\ell)!}} \, e_{n+k-\ell}.
    \]
    
    \item For $\ell > k$, the projection vanishes when $n < \ell - k$ and equals
    \[
     P(z^{n+k}\overline{z}^\ell) = \frac{(n+k)!}{\alpha^{n+k} }\sqrt{\frac{\alpha^{n+k-\ell}}{(n+k-\ell)!}} \, e_{n+k-\ell},
    \]
    when $n \geq \ell - k$.
\end{enumerate}
\end{lemma}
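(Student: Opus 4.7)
The plan is to expand $P(z^{n+k}\overline{z}^\ell)$ in the orthonormal basis $\{e_m\}_{m\geq 0}$ of $F_\alpha^2$, where by Lemma~\ref{lem:monomial-orthogonality} we have $e_m(z) = \sqrt{\alpha^m/m!}\, z^m$. Since $P$ is the orthogonal projection of $L^2(\mathbb{C}, d\lambda_\alpha)$ onto $F_\alpha^2$, for any $f \in L^2(\mathbb{C}, d\lambda_\alpha)$ we have $Pf = \sum_{m=0}^\infty \langle f, e_m\rangle\, e_m$. So the whole problem reduces to computing the scalars $c_m := \langle z^{n+k}\overline{z}^\ell, e_m\rangle$ for each $m \geq 0$.

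Writing out $c_m$ in polar coordinates $z = re^{i\theta}$, the integral splits into an angular factor $\int_0^{2\pi} e^{i(n+k-\ell-m)\theta}\,d\theta$ and a radial factor $\int_0^\infty r^{2(n+k)+1} e^{-\alpha r^2}\,dr$. The angular factor equals $2\pi$ if and only if $m = n+k-\ell$ and vanishes otherwise. Thus at most one term $e_m$ survives, namely $m = n+k-\ell$, provided this index is a legitimate non-negative integer.

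This is the one real case distinction. If $k \geq \ell$, then $n+k-\ell \geq n \geq 0$ for every $n \in \mathbb{N}_0$, so the single non-vanishing index is always admissible, and part (a) follows. If $\ell > k$, then $m = n+k-\ell \geq 0$ requires $n \geq \ell - k$; for $n < \ell - k$ no admissible $m$ exists, every $c_m$ vanishes, and $P(z^{n+k}\overline{z}^\ell) = 0$, while for $n \geq \ell - k$ the same computation as in part (a) applies. This handles the two sub-cases of (b).

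Once the admissible index has been identified, only routine bookkeeping remains: evaluate the radial integral by substituting $u = \alpha r^2$, which gives $\Gamma(n+k+1)/(2\alpha^{n+k+1}) = (n+k)!/(2\alpha^{n+k+1})$, multiply by the normalization constant $\sqrt{\alpha^{n+k-\ell}/(n+k-\ell)!}$ coming from $e_{n+k-\ell}$ and the measure factor $\alpha/\pi$, and collect powers of $\alpha$ to recover $\frac{(n+k)!}{\alpha^{n+k}}\sqrt{\frac{\alpha^{n+k-\ell}}{(n+k-\ell)!}}$. The only conceptual step is the case analysis on the sign of $n+k-\ell$; the rest is a direct Gaussian integral of the type already performed in the proof of Lemma~\ref{lem:monomial-orthogonality}, so I do not expect any genuine obstacle.
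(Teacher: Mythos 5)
Your proposal is correct and follows essentially the same route as the paper: expand $P(z^{n+k}\overline{z}^\ell)$ in the orthonormal basis $\{e_m\}$, observe that orthogonality of monomials forces the single surviving index $m=n+k-\ell$, and split into cases according to whether this index is non-negative. The only cosmetic difference is that you re-evaluate the Gaussian integral in polar coordinates, whereas the paper folds the inner product into $\langle z^{n+k}, z^{m+\ell}\rangle$ and cites Lemma~\ref{lem:monomial-orthogonality} directly.
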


\begin{proof}
We analyze both cases using the orthonormal basis $e_m(z) = \sqrt{\frac{\alpha^m}{m!}} z^m$ of $F_\alpha^2$.

\noindent\textbf{Case (a): $k \geq \ell$} \\
For any $m \geq 0$, the inner product decomposes as:
\begin{align*}
\langle z^{n+k}\overline{z}^\ell, e_m \rangle 
&= \sqrt{\frac{\alpha^m}{m!}} \langle z^{n+k}, z^{m+\ell} \rangle_{F_\alpha^2} \\
&= \sqrt{\frac{\alpha^m}{m!}} \cdot \frac{(n+k)!}{\alpha^{n+k}} \delta_{n+k,m+\ell}.
\end{align*}
By Lemma~\ref{lem:monomial-orthogonality}, the projection is nonzero only when $m = n+k-\ell \geq 0$, yielding:
\begin{align*}
P(z^{n+k}\overline{z}^\ell) &= \sqrt{\frac{\alpha^{n+k-\ell}}{(n+k-\ell)!}} \frac{(n+k)!}{\alpha^{n+k}} e_{n+k-\ell}. \\
\end{align*}

\noindent\textbf{Case (b): $\ell > k$} \\
The same calculation applies, but now
\begin{itemize}
    \item[(i)] For $0 \leq n < \ell - k$, the condition $m = n+k-\ell < 0$ is impossible, so the projection vanishes.
    \item[(ii)] For $n \geq \ell - k$, we recover the same expression as in Case (a) since $m = n+k-\ell \geq 0$.
\end{itemize}
\end{proof}
\section{Operator Matrices on Fock Space}\label{sec3}

We now establish detailed matrix representations for fundamental operators on the Fock space $F_\alpha^2$ with respect to the orthonormal basis $\left\{e_n(z) = \sqrt{\frac{\alpha^n}{n!}}z^n\right\}_{n=0}^\infty$.

\subsection{Toeplitz Operators}

\begin{theorem}
For a bounded harmonic symbol $\phi(z) = \displaystyle\sum_{i=0}^\infty a_i z^i + \sum_{j=1}^\infty b_j \overline{z}^j \in L^\infty(\mathbb{C})$, the matrix elements of $T_\phi$ are given by
\[
\langle T_\phi e_n, e_m \rangle = 
\begin{cases}
\sqrt{\frac{\alpha^{n-m} m!}{n!}} \, a_{m-n} & \text{for } m \geq n \\
\sqrt{\frac{\alpha^{m-n} n!}{m!}} \, b_{n-m} & \text{for } n > m \, 
\end{cases}
\]
for non-negative integers $m$ and $n$.
\end{theorem}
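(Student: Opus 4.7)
The plan is to compute the matrix coefficients directly by exploiting three facts: self-adjointness of the projection $P$, linearity in $\phi$, and the monomial orthogonality relation from Lemma~\ref{lem:monomial-orthogonality}. Since $e_m \in F_\alpha^2$, we have
\[
\langle T_\phi e_n, e_m \rangle \;=\; \langle P(\phi e_n), e_m \rangle \;=\; \langle \phi e_n, e_m \rangle,
\]
where the last inner product is taken in $L^2(\mathbb{C}, d\lambda_\alpha)$. Expanding $\phi$ in its harmonic series and interchanging summation with the integral (justified by boundedness of $\phi$ and the rapid Gaussian decay) splits the computation into an analytic contribution and an anti-analytic contribution, which I will handle separately.

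For the analytic piece $\sum_{i\ge 0} a_i z^i$, I would substitute $e_n = \sqrt{\alpha^n/n!}\,z^n$ and compute
\[
\langle z^i e_n, e_m \rangle \;=\; \sqrt{\tfrac{\alpha^n}{n!}}\sqrt{\tfrac{\alpha^m}{m!}}\,\langle z^{n+i}, z^m\rangle.
\]
By Lemma~\ref{lem:monomial-orthogonality} this vanishes unless $m = n+i$, in which case $\langle z^{n+i},z^m\rangle = m!/\alpha^m$. Simplifying the factors of $\alpha$ and factorials then yields exactly $\sqrt{\alpha^{n-m}m!/n!}\,a_{m-n}$, and the requirement $i \ge 0$ forces $m \ge n$.

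For the anti-analytic piece $\sum_{j\ge 1} b_j \overline{z}^j$, the calculation is symmetric: writing $\overline{e_m} = \sqrt{\alpha^m/m!}\,\overline{z}^m$ produces
\[
\langle \overline{z}^j e_n, e_m \rangle \;=\; \sqrt{\tfrac{\alpha^n}{n!}}\sqrt{\tfrac{\alpha^m}{m!}}\,\langle z^n, z^{j+m}\rangle,
\]
which is nonzero only when $n = j+m$, i.e.\ when $j = n-m$ with $n > m$ (since $j\ge 1$). A bookkeeping of the Gaussian normalization constants then gives $\sqrt{\alpha^{m-n}n!/m!}\,b_{n-m}$.

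Finally, I would observe that the two conditions $m\ge n$ and $n>m$ are mutually exclusive and jointly exhaust $\mathbb{N}_0\times\mathbb{N}_0$, so combining the two contributions yields the piecewise formula in the statement. I do not expect any serious obstacle: the proof is essentially two applications of the monomial orthogonality lemma, and the only delicate point is keeping track of the four factors $\sqrt{\alpha^n/n!}$, $\sqrt{\alpha^m/m!}$, and the $s!/\alpha^s$ from the inner product so that they collapse into the clean square roots displayed in the theorem.
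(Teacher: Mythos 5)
Your proposal is correct and follows essentially the same route as the paper's own proof: both reduce $\langle T_\phi e_n, e_m\rangle$ to $\langle \phi e_n, e_m\rangle$, expand $\phi$ into its analytic and anti-analytic parts, and apply the monomial orthogonality relation of Lemma~\ref{lem:monomial-orthogonality} with the same case split $m\ge n$ versus $n>m$. The coefficient bookkeeping you describe matches the paper's computation exactly.
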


\begin{proof}
Here,
\begin{align*}
\langle T_\phi \, e_n, e_m \rangle&= \langle P_\phi \, e_n, e_m \rangle \cr
&=\sqrt{\frac{\alpha^n}{n!}} \sqrt{\frac{\alpha^m}{m!}} \left(\sum\limits_{i=0}^\infty a_i \langle z^{i+n}, z^m \rangle+\sum\limits_{j=1}^\infty b_j \langle z^n, z^{m+j} \rangle \right).
\end{align*}
The matrix elements decompose into two cases: \\
Case 1.  For $m \geq n$, only the analytic part contributes:
\begin{align*}
\langle T_\phi \, e_n, e_m \rangle&=\sqrt{\frac{\alpha^n}{n!}} \sqrt{\frac{\alpha^m}{m!}} \left(\sum\limits_{i=0}^\infty a_i \langle z^{i+n}, z^m \rangle \right)\cr
&=\sqrt{\frac{\alpha^n}{\alpha^m}} \times \sqrt{\frac{m!}{n!}} \,\,\, a_{m-n}\cr&=\sqrt{\frac{\alpha^{n-m} m!}{n!}} \, a_{m-n} .
\end{align*}
Case 2.  For $n > m$, only the anti-analytic part contributes:
\begin{align*}
\langle T_\phi \, e_n, e_m \rangle&=\sqrt{\frac{\alpha^n}{n!}} \sqrt{\frac{\alpha^m}{m!}} \left(\sum\limits_{j=1}^\infty b_j \langle z^n, z^{m+j} \rangle \right)\cr
&=\sqrt{\frac{\alpha^m}{\alpha^n}} \times \sqrt{\frac{n!}{m!}} \,\,\, b_{n-m}\cr&=\sqrt{\frac{\alpha^{m-n} n!}{m!}} \, b_{n-m}
\end{align*}
for non-negative integers $m$ and $n$.
 Its matrix with respect to the orthonormal basis is given by
\[
T_\phi=
\begin{bmatrix}
a_0 & \frac{1}{\sqrt{\alpha}}b_1 & \frac{\sqrt{2}}{\sqrt{\alpha^2}}b_2 & \frac{\sqrt{6}}{\sqrt{\alpha^3}}b_3 & \hdots \\
\frac{1}{\sqrt{\alpha}}a_1 & a_0 & \frac{\sqrt{2}}{\sqrt{\alpha}}b_1 & \frac{\sqrt{6}}{\sqrt{\alpha^2}}b_2 & \hdots \\
\frac{\sqrt{2}}{\sqrt{\alpha^2}}a_2 & \frac{\sqrt{2}}{\sqrt{\alpha}}a_1 & a_0 & \frac{\sqrt{3}}{\sqrt{\alpha}}b_1 & \hdots \\
\frac{\sqrt{6}}{\sqrt{\alpha^3}}a_3 & \frac{\sqrt{6}}{\sqrt{\alpha^2}}a_2 & \frac{\sqrt{3}}{\sqrt{\alpha}}a_1 & a_0 & \hdots \\
\vdots & \vdots & \vdots & \vdots & \ddots
\end{bmatrix}
\]
and the adjoining of the matrix of $T_\phi$ is given by 
\[
T_\phi^*=
\begin{bmatrix}
\overline{a_0} & \frac{1}{\sqrt{\alpha}}\overline{a_1} & \frac{\sqrt{2}}{\sqrt{\alpha^2}}\overline{a_2} & \frac{\sqrt{6}}{\sqrt{\alpha^3}}\overline{a_3} & \hdots \\
\frac{1}{\sqrt{\alpha}}\overline{b_1} & \overline{a_0} & \frac{\sqrt{2}}{\sqrt{\alpha}}\overline{a_1} & \frac{\sqrt{6}}{\sqrt{\alpha^2}}\overline{a_2} & \hdots \\
\frac{\sqrt{2}}{\sqrt{\alpha^2}}\overline{b_2} & \frac{\sqrt{2}}{\sqrt{\alpha}}\overline{b_1} & \overline{a_0} & \frac{\sqrt{3}}{\sqrt{\alpha}}\overline{a_1} & \hdots \\
\frac{\sqrt{6}}{\sqrt{\alpha^3}}\overline{b_3} & \frac{\sqrt{6}}{\sqrt{\alpha^2}}\overline{b_2} & \frac{\sqrt{3}}{\sqrt{\alpha}}\overline{b_1} & \overline{a_0} & \hdots \\
\vdots & \vdots & \vdots & \vdots & \ddots
\end{bmatrix}
\]
which is nothing but the matrix of $T_{\overline{\phi}}$ and therefore $T_\phi^*=T_{\overline{\phi}}$ \,.
\end{proof}
\subsection{Hankel Operators}

\begin{theorem}
The Hankel operator $H_\phi$ has matrix elements
\[
\langle H_\phi e_n, e_m \rangle = \frac{(m+n+1)!}{\sqrt{\alpha^{m+n+1} m! (n+1)!}} a_{m+n+1}\, 
\]
for non-negative integers $m$ and $n$.
\end{theorem}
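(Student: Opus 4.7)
The plan is to start from the definition $H_\phi(f) = P M_\phi J(f)$ and evaluate $\langle H_\phi e_n, e_m \rangle$ directly by applying each operator in turn to the basis vector $e_n$. Since this is the first appearance of the Hankel matrix entries on $F_\alpha^2$, I would assume the same harmonic symbol $\phi(z) = \sum_{i=0}^{\infty} a_i z^i + \sum_{j=1}^{\infty} b_j \overline{z}^j$ as in the preceding Toeplitz theorem, so the reader sees how both pieces interact with the flip.

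First I would compute $J(e_n) = \overline{e_{n+1}} = \sqrt{\alpha^{n+1}/(n+1)!}\,\overline{z}^{n+1}$ from the definition, and then write
\[
M_\phi J(e_n) \;=\; \sqrt{\tfrac{\alpha^{n+1}}{(n+1)!}} \Bigl(\sum_{i=0}^{\infty} a_i\, z^{i}\overline{z}^{n+1} \;+\; \sum_{j=1}^{\infty} b_j\, \overline{z}^{\,n+j+1}\Bigr).
\]
The second sum is purely antiholomorphic of degree strictly bigger than the holomorphic part, so by Lemma~\ref{lem:projection-monomials}(b) with $k=0$ and $\ell=n+j+1>0$ every such term is annihilated by $P$. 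This is the key observation that makes only the $a_i$'s survive, and hence accounts for the absence of the $b_j$'s from the stated formula.

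For the first sum I would invoke Lemma~\ref{lem:projection-monomials} again, this time with $k=i$ and $\ell=n+1$: the projection $P(z^{i}\overline{z}^{n+1})$ vanishes unless $i \geq n+1$, in which case it equals $\tfrac{i!}{\alpha^{i}}\sqrt{\tfrac{\alpha^{i-n-1}}{(i-n-1)!}}\, e_{i-n-1}$. Taking the inner product with $e_m$ and using orthonormality picks out the single index $i-n-1=m$, i.e., $i=m+n+1$. The rest is arithmetic: combine the prefactor $\sqrt{\alpha^{n+1}/(n+1)!}$ with $\tfrac{(m+n+1)!}{\alpha^{m+n+1}}\sqrt{\alpha^{m}/m!}$ and note that the $\alpha$-powers consolidate as $\alpha^{n+1}\cdot\alpha^{m}/\alpha^{2(m+n+1)} = 1/\alpha^{m+n+1}$ under the square root, yielding the claimed expression $\tfrac{(m+n+1)!}{\sqrt{\alpha^{m+n+1}\,m!\,(n+1)!}}\,a_{m+n+1}$.

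There is no real obstacle here; the statement is essentially bookkeeping on the orthogonal basis, and the one conceptual point — that the antiholomorphic part of $\phi$ contributes nothing after the flip — drops out cleanly from Lemma~\ref{lem:projection-monomials}. The only care needed is to keep the factorials and powers of $\alpha$ straight, in particular identifying $\alpha^{n+1}\alpha^{m}\alpha^{m+n+1}/\alpha^{2(m+n+1)}=\alpha^{m+n+1}$ inside the square root so that the denominator collapses to $\sqrt{\alpha^{m+n+1}m!(n+1)!}$.
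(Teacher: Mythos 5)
Your proof is correct and follows essentially the same route as the paper: apply $J$ to get $\overline{e_{n+1}}$, multiply by the harmonic symbol, note that the antiholomorphic part contributes nothing, and isolate $i=m+n+1$ by orthogonality (the paper folds the conjugate power into the inner product and uses Lemma~\ref{lem:monomial-orthogonality}, while you route the same bookkeeping through Lemma~\ref{lem:projection-monomials}). The only blemish is the closing identity $\alpha^{n+1}\alpha^{m}\alpha^{m+n+1}/\alpha^{2(m+n+1)}=\alpha^{m+n+1}$, which as written equals $1$; your earlier consolidation $\alpha^{n+1}\alpha^{m}/\alpha^{2(m+n+1)}=\alpha^{-(m+n+1)}$ is the correct version and already yields the stated denominator.
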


\begin{proof}
Using the anti-analytic projection
\begin{align*}
 \langle H_\phi e_n, e_m \rangle &= \langle P M_\phi J e_n, e_m \rangle \\
&=\sqrt{\frac{\alpha^{n+m+1}}{(n+1)! m!}} \left(\sum^\infty_{i=0} a_i \langle z^i, z^{m+n+1} \rangle+\sum^\infty_{j=1} \langle \overline{z}^j, z^{m+n+1} \rangle\right)\\
&= \frac{(m+n+1)!}{\sqrt{\alpha^{m+n+1} m! (n+1)!}} a_{m+n+1}   
\end{align*}
for non-negative integers $m$ and $n$.
\end{proof}
The Hankel matrix exhibits a distinct pattern
\[
H_\phi=
\begin{bmatrix}
\frac{1}{\sqrt{\alpha}}a_1 &  \frac{\sqrt{2}}{\sqrt{\alpha^2}}a_2 & \frac{\sqrt{6}}{\sqrt{\alpha^3}}a_3 & \frac{2\sqrt{6}}{\sqrt{\alpha^4}}a_4 & \hdots \\
\frac{2}{\sqrt{\alpha^2}}a_2 & \frac{3\sqrt{2}}{\sqrt{\alpha^3}}a_3 & \frac{4\sqrt{6}}{\sqrt{\alpha^4}}a_4 & \frac{10\sqrt{6}}{\sqrt{\alpha^5}}a_5 & \hdots \\
\frac{3\sqrt{2}}{\sqrt{\alpha^3}}a_3 & \frac{12}{\sqrt{\alpha^4}}a_4 & \frac{20\sqrt{3}}{\sqrt{\alpha^5}}a_5 & \frac{60\sqrt{3}}{\sqrt{\alpha^6}}a_6 & \hdots \\
\frac{4\sqrt{6}}{\sqrt{\alpha^4}}a_4 & \frac{20\sqrt{3}}{\sqrt{\alpha^5}}a_5 & \frac{120}{\sqrt{\alpha^6}}a_6 & \frac{210\sqrt{4}}{\sqrt{\alpha^7}}a_7 & \hdots \\
\vdots & \vdots & \vdots & \vdots & \ddots 
\end{bmatrix}
\]
and the adjoining of the matrix of $H_\phi$ is given by
\[
H_\phi^*=
\begin{bmatrix}
\frac{1}{\sqrt{\alpha}}\overline{a_1} & \frac{2}{\sqrt{\alpha^2}}\overline{a_2} & \frac{3\sqrt{2}}{\sqrt{\alpha^3}}\overline{a_3} & \frac{4\sqrt{6}}{\sqrt{\alpha^4}}\overline{a_4} & \hdots \\
\frac{\sqrt{2}}{\sqrt{\alpha^2}}\overline{a_2} & \frac{3\sqrt{2}}{\sqrt{\alpha^3}}\overline{a_3} & \frac{12}{\sqrt{\alpha^4}}\overline{a_4} & \frac{20\sqrt{3}}{\sqrt{\alpha^5}}\overline{a^5} & \hdots \\
\frac{\sqrt{6}}{\sqrt{\alpha^3}}\overline{a_3} & \frac{4\sqrt{6}}{\sqrt{\alpha^4}}\overline{a_4} & \frac{20\sqrt{3}}{\sqrt{\alpha^5}}\overline{a_5} & \frac{120}{\sqrt{\alpha^6}}\overline{a_6} & \hdots \\
\frac{2\sqrt{6}}{\sqrt{\alpha^4}}\overline{a_4} & \frac{10\sqrt{6}}{\sqrt{\alpha^5}}\overline{a_5} & \frac{60\sqrt{3}}{\sqrt{\alpha^6}}\overline{a_6} & \frac{210\sqrt{4}}{\sqrt{\alpha^7}}\overline{a_7} & \hdots \\
\vdots & \vdots & \vdots & \vdots & \ddots
\end{bmatrix}
\]
and therefore $H_\phi^*=H_{\overline{\phi}}$\,.

\subsection{H-Toeplitz Operators}




We study H-Toeplitz operators on the Fock space $F_\alpha^2$ through their matrix representations. First, we define the dilation operator $K: F_\alpha^2 \rightarrow L^2(\mathbb{C}, d\lambda_\alpha)$ by:
\[
K(e_{2n})(z) = e_n(z) = \sqrt{\frac{\alpha^n}{n!}} z^n, \quad 
K(e_{2n+1})(z) = \overline{e_{n+1}}(z) = \sqrt{\frac{\alpha^{n+1}}{(n+1)!}} \overline{z}^{n+1}
\]
for $n \geq 0$, $z \in \mathbb{C}$. This bounded linear operator ($\|K\|=1$) has adjoint $K^*$ satisfying:
\[
K^*(e_n) = e_{2n}, \quad K^*(\overline{e_{n+1}}) = e_{2n+1}
\]
with the properties $KK^* = I_{L^2}$ and $K^*K = I_{F_\alpha^2}$.

For $\phi \in L^\infty(\mathbb{C})$, the H-Toeplitz operator $S_\phi: F_\alpha^2 \rightarrow F_\alpha^2$ is defined as
\[
S_\phi(f) = P M_\phi K(f),
\]
where $P$ is the orthogonal projection and $M_\phi$ is multiplication by $\phi$. Its matrix elements are:

\begin{align*}
\langle S_\phi e_{2n}, e_m \rangle &= \langle T_\phi e_n, e_m \rangle \\
&= \begin{cases}
\sqrt{\frac{\alpha^n}{\alpha^m}} \sqrt{\frac{m!}{n!}} a_{m-n} & m \geq n \\
\sqrt{\frac{\alpha^m}{\alpha^n}} \sqrt{\frac{n!}{m!}} b_{n-m} & n > m
\end{cases} \\ {\rm and} \,\,\,
\langle S_\phi e_{2n+1}, e_m \rangle &= \langle H_\phi e_n, e_m \rangle \\
&= \frac{1}{\sqrt{\alpha^{m+n+1}}} \frac{(m+n+1)!}{\sqrt{m!(n+1)!}} a_{m+n+1}
\end{align*}
for non-negative integers $m$ and $n$.
The explicit matrix representation is given by

\[
S_\phi=
\begin{bmatrix}
a_0 &\frac{1}{\sqrt{\alpha}}a_1 & \frac{1}{\sqrt{\alpha}}b_1 & \frac{\sqrt{2}}{\sqrt{\alpha^2}}a_2 &\frac{\sqrt{2}}{\sqrt{\alpha^2}}b_2& \frac{\sqrt{6}}{\sqrt{\alpha^3}}a_3 & \frac{\sqrt{6}}{\sqrt{\alpha^3}}b_3&\frac{2\sqrt{6}}{\sqrt{\alpha^4}}a_4 & \hdots \\
\frac{1}{\sqrt{\alpha}}a_1 &\frac{2}{\sqrt{\alpha^2}}a_2 &a_0& \frac{3\sqrt{2}}{\sqrt{\alpha^3}}a_3 &\frac{\sqrt{2}}{\sqrt{\alpha}}b_1& \frac{4\sqrt{6}}{\sqrt{\alpha^4}}a_4 & \frac{\sqrt{6}}{\sqrt{\alpha^2}}b_2&\frac{10\sqrt{6}}{\sqrt{\alpha^5}}a_5 & \hdots \\
\frac{\sqrt{2}}{\sqrt{\alpha^2}}a_2 & \frac{3\sqrt{2}}{\sqrt{\alpha^3}}a_3 & \frac{\sqrt{2}}{\sqrt{\alpha}}a_1&\frac{12}{\sqrt{\alpha^4}}a_4 & a_0&\frac{20\sqrt{3}}{\sqrt{\alpha^5}}a_5 & \frac{\sqrt{3}}{\sqrt{\alpha}}b_1&\frac{60\sqrt{3}}{\sqrt{\alpha^6}}a_6 & \hdots \\
\frac{\sqrt{6}}{\sqrt{\alpha^3}}a_3 & \frac{4\sqrt{6}}{\sqrt{\alpha^4}}a_4 &\frac{\sqrt{6}}{\sqrt{\alpha^2}}a_2& \frac{20\sqrt{3}}{\sqrt{\alpha^5}}a_5 & \frac{\sqrt{3}}{\sqrt{\alpha}}a_1&\frac{120}{\sqrt{\alpha^6}}a_6 & a_0&\frac{210\sqrt{4}}{\sqrt{\alpha^7}}a_7 & \hdots \\
\vdots & \vdots & \vdots & \vdots &\vdots &\vdots &\vdots &\vdots & \ddots 
\end{bmatrix}
\]
and the adjoining of the matrix of $S_\phi$ is given by
\[
S_\phi^*=
\begin{bmatrix}
\overline{a_0}& \frac{1}{\sqrt{\alpha}}\overline{a_1} & \frac{\sqrt{2}}{\sqrt{\alpha^2}}\overline{a_2} & \frac{\sqrt{6}}{\sqrt{\alpha^3}}\overline{a_3} & \hdots \\
\frac{1}{\sqrt{\alpha}}\overline{a_1} & \frac{2}{\sqrt{\alpha^2}}\overline{a_2} & \frac{3\sqrt{2}}{\sqrt{\alpha^3}}\overline{a_3} &\frac{4\sqrt{6}}{\sqrt{\alpha^4}}\overline{a_4}  & \hdots \\
\frac{1}{\sqrt{\alpha}}\overline{b_1}  & \overline{a_0} & \frac{\sqrt{2}}{\sqrt{\alpha}}\overline{a_1} & \frac{\sqrt{6}}{\sqrt{\alpha^2}}\overline{a_2}& \hdots \\
 \frac{\sqrt{2}}{\sqrt{\alpha^2}}\overline{a_2} & \frac{3\sqrt{2}}{\sqrt{\alpha^3}}\overline{a_3} & \frac{12}{\sqrt{\alpha^4}}\overline{a_4} & \frac{20\sqrt{3}}{\sqrt{\alpha^5}}\overline{a_5}& \hdots \\
\frac{\sqrt{2}}{\sqrt{\alpha^2}}\overline{b_2} & \frac{\sqrt{2}}{\sqrt{\alpha}}\overline{b_1} & \overline{a_0} & \frac{\sqrt{3}}{\sqrt{\alpha}}\overline{a_1} & \hdots \\
\frac{\sqrt{6}}{\sqrt{\alpha^3}}\overline{a_3}  & \frac{4\sqrt{6}}{\sqrt{\alpha^4}}\overline{a_4} &\frac{20\sqrt{3}}{\sqrt{\alpha^5}}\overline{a_5}  & \frac{120}{\sqrt{\alpha^6}}\overline{a_6} & \hdots \\
\vdots & \vdots & \vdots & \vdots & \ddots 
\end{bmatrix}.
\] 
with $S_\phi^*=S_{\overline{\phi}}$ .\\
\begin{remark}
    Clearly, we can see that the matrix of $T_\phi$ can be obtained by deleting every odd column of the matrix of $S_\phi$ and the matrix of $H_\phi$ can be obtained by deleting every even column of the matrix of $S_\phi$.
\end{remark}

\section{Algebraic properties of H-Toeplitz operator on Fock-space $F_{\alpha}^2$}\label{sec4}

In this section, we study the commutativity of H-Toeplitz operators for analytic and harmonic symbols. We explicitly demonstrate the non-commutative nature of H-Toeplitz operators through a carefully constructed example with polynomial symbols.
\begin{example}
Consider the Fock space $F_\alpha^2$ with basis $\{e_n(z)\}_{n=0}^\infty$ where 
\[
e_n(z) = \sqrt{\frac{\alpha^n}{n!}}\, z^n,
\]
and define two operators:  
(a) $S_\phi$ with analytic symbol $\phi(z) = z^2$, and  
(b) $S_\psi$ with anti-analytic symbol $\psi(z) = \overline{z}$.
First, we compute the action of $S_\phi$ on $e_2$. Since
\[
S_\phi(e_2(z)) = S_{z^2}(e_2(z)) = P M_{z^2} K(e_2) = P\big(z^2 K(e_2)\big),
\]
we obtain
\[
S_\phi(e_2) = \sqrt{\alpha}\, z^3.
\]
Next, for $S_\psi$ acting on $e_2$, we have
\[
S_\psi(e_2) = P M_{\overline{z}} K(e_2) = P\big(\overline{z}\, \sqrt{\alpha}\, z\big) = \sqrt{\alpha}\, P(z\overline{z}).
\]
Since $P(z\overline{z}) = 1/\alpha$, it follows that
\[
S_\psi(e_2) = \frac{1}{\sqrt{\alpha}}.
\]
For the composition $S_\phi S_\psi(e_2)$, we compute
\[
S_\phi S_\psi(e_2) = S_\phi\!\left(\frac{1}{\sqrt{\alpha}}\right) 
= P M_{z^2}\!\left(\frac{1}{\sqrt{\alpha}}\right) 
= \frac{1}{\sqrt{\alpha}}\, z^2.
\]
In terms of the orthonormal basis, this equals
\[
S_\phi S_\psi(e_2) = \sqrt{\frac{2}{\alpha^3}}\, e_2.
\]
On the other hand, for $S_\psi S_\phi(e_2)$ we get
\[
S_\psi S_\phi(e_2) = S_\psi\!\left(\sqrt{\alpha}\, z^3\right) = \sqrt{\frac{3}{\alpha^2}}\, e_1.
\]
Hence, the distinct results
\[
S_\phi S_\psi(e_2) = \sqrt{\frac{2}{\alpha^3}}\, e_2 
\qquad \text{and} \qquad 
S_\psi S_\phi(e_2) = \sqrt{\frac{3}{\alpha^2}}\, e_1
\]
provide an explicit verification that $S_\phi S_\psi \neq S_\psi S_\phi$, thereby confirming the non-commutative algebra of H-Toeplitz operators.
\end{example}

\begin{theorem}
Let $\phi(z) = \displaystyle\sum_{n=0}^\infty a_n z^n$ and $\psi(z) = \displaystyle\sum_{m=0}^\infty b_m z^m$ be bounded analytic functions on the complex plane $\mathbb{C}$, satisfying $\phi(0) = 0 = \psi(0)$. Suppose that each coefficient $a_n$ and $b_m$ is non-zero. Further, assume that for all non-negative integers $n$ and for a fixed integer $k$, the following inequality holds
\[
\frac{b_{n+k}}{a_{n+k}} \geq \frac{b_{2n+1}}{a_{2n+1}}.
\]
Then, the H-Toeplitz operators $S_\phi$ and $S_\psi$ on the Fock space $F_\alpha^2$ commute if and only if the functions $\phi$ and $\psi$ are linearly dependent.   
\end{theorem}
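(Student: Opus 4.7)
The \emph{if} direction is immediate: if $\psi = c\,\phi$, then $S_\psi = c\,S_\phi$, so the operators trivially commute. For the converse, my plan is to translate the operator identity $[S_\phi, S_\psi] = 0$ into scalar relations among the coefficients $\{a_n\}, \{b_m\}$, and then to use the hypothesis to force every ratio $b_n/a_n$ to be a common constant $c$, from which $\psi = c\,\phi$ follows.

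The main computational tool is the decomposition, readable from the matrix descriptions of the previous section, $S_\phi e_{2n} = T_\phi e_n$ and $S_\phi e_{2n+1} = H_\phi e_n$, together with the analogous identities for $\psi$. Because both symbols are analytic with $a_0 = b_0 = 0$, the series for $T_\phi e_n$ and $H_\phi e_n$ have a transparent factorial structure and the diagonal contributions vanish. I would apply the commutator to the basis vectors $e_0, e_1, e_2, \ldots$ in turn. For $e_0$, for instance, $S_\psi e_0 = T_\psi e_0$ expands in $\{e_k\}_{k\geq 1}$, and the outer $S_\phi$ then splits according to the parity of $k$, producing a double sum that must coincide with $S_\psi S_\phi e_0$. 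Equating coefficients of $e_m$ on both sides yields, after cancelling factorial weights, identities of the form $a_i b_j = a_j b_i$ on specific index pairs $(i,j)$; since no coefficient vanishes, these translate to ratio equalities $b_i/a_i = b_j/a_j$.

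Iterating this step over the remaining basis vectors links ratios within certain parity/shift classes, but does not automatically equate ratios across classes. This is precisely the gap that the hypothesis is designed to fill: the commutativity calculation itself produces a bound comparing $b_{n+k}/a_{n+k}$ and $b_{2n+1}/a_{2n+1}$ with the reverse sign, while the standing inequality $b_{n+k}/a_{n+k} \geq b_{2n+1}/a_{2n+1}$ supplies the opposite bound. The two squeeze together, and a short induction then propagates the resulting common value to every ratio $b_n/a_n$. Together with $a_0 = b_0 = 0$, this yields $b_n = c\,a_n$ for all $n \geq 1$, hence $\psi = c\,\phi$.

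The principal obstacle is the parity-driven bookkeeping in the coefficient comparison. Since $S_\phi$ couples a Toeplitz block acting on even-indexed inputs with a Hankel block acting on odd-indexed inputs, the commutator $[S_\phi, S_\psi] e_m$ is built from four cross-terms with different factorial weightings. Organising the cancellation so that clean $a_i b_j - a_j b_i = 0$ relations emerge — rather than opaque polynomial identities binding many coefficients at once — is the most delicate part of the argument and is precisely what singles out the particular index shift $k$ appearing in the hypothesis.
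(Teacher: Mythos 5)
Your overall strategy coincides with the paper's: the forward direction is immediate, and for the converse the paper likewise applies the commutator to a basis vector --- in fact only to the constant function $1=e_0$ --- expands $S_\phi=PM_\phi K$ through its Toeplitz/Hankel block structure, and compares coefficients of the powers of $z$. The genuine problem with your plan is the claim that equating coefficients ``yields identities of the form $a_ib_j=a_jb_i$ on specific index pairs.'' It does not. Because the Hankel block of $S_\phi$ sends the $\overline{e_{m+1}}$-component of $K(S_\psi 1)$ to a monomial of degree $i-m-1$, each fixed power of $z$ in $[S_\phi,S_\psi]1$ receives contributions from infinitely many index pairs; for instance the constant term gives
\[
\sum_{m=0}^\infty \sqrt{\frac{(2m+1)!}{\alpha^{2m+1}}}\sqrt{\frac{(m+1)!}{\alpha^{m+1}}}\,\bigl(a_{m+1}b_{2m+1}-a_{2m+1}b_{m+1}\bigr)=0,
\]
a single scalar equation binding all the pairs $(m+1,2m+1)$ simultaneously, from which no individual relation $a_{m+1}b_{2m+1}=a_{2m+1}b_{m+1}$ follows. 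The hypothesis $b_{n+k}/a_{n+k}\ge b_{2n+1}/a_{2n+1}$ is consumed exactly here: it makes every summand of one sign, so the vanishing of the sum forces each summand to vanish termwise. You instead position the hypothesis later, as a device for bridging ``parity/shift classes'' after clean pairwise identities are already in hand; but once termwise vanishing is established no such bridge is needed, since $b_{n+k}/a_{n+k}=b_{2n+1}/a_{2n+1}$ for all $n,k$ specializes at $n=0$ to $b_k/a_k=b_1/a_1$ for every $k\ge 1$, which is already the full conclusion. Your ``squeeze'' intuition is the right mechanism, but it must be deployed at the deconvolution of these infinite sums, not afterwards; as literally written, the step ``equating coefficients yields $a_ib_j=a_jb_i$'' would fail.
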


\begin{proof}
If $\phi$ and $\psi$ are linearly dependent, then clearly $S_\phi$ and $S_\psi$ commute.

For the converse, assume $S_\phi S_\psi = S_\psi S_\phi$. We analyze the action on the constant function $1 \in F_\alpha^2$:

\begin{align*}
S_\phi S_\psi(1) &= S_\psi S_\phi(1) \\
\Rightarrow P M_\phi k P M_\psi (1) &= P M_\psi k P M_\phi (1) \\
\Rightarrow P M_\phi k \left( \sum_{m=0}^\infty b_m z^m \right) &= P M_\psi k \left( \sum_{n=0}^\infty a_n z^n \right).
\end{align*}

Expressing in terms of the orthonormal basis $\{e_n\}$ where $e_n(z) = \sqrt{\frac{\alpha^n}{n!}} z^n$, we obtain:

\begin{align*}
P M_\phi \left[ \sum_{m=0}^\infty \sqrt{\frac{(2m)!}{\alpha^{2m}}} b_{2m} e_m + \sqrt{\frac{(2m+1)!}{\alpha^{2m+1}}} b_{2m+1} \overline{e_{m+1}} \right] \\
= P M_\psi \left[ \sum_{n=0}^\infty \sqrt{\frac{(2n)!}{\alpha^{2n}}} a_{2n} e_n + \sqrt{\frac{(2n+1)!}{\alpha^{2n+1}}} a_{2n+1} \overline{e_{n+1}} \right].
\end{align*}

Projecting onto $F_\alpha^2$ and equating coefficients gives

\begin{align*}
\sum_{n,m} \sqrt{\frac{(2m)!}{m! \alpha^m}} a_n b_{2m} z^{n+m} &+ \sum_{n,m} \sqrt{\frac{(2m+1)!}{(m+1)! \alpha^{m+1}}} a_n b_{2m+1} P(z^n \overline{z}^{m+1}) \\
&= \sum_{n,m} \sqrt{\frac{(2n)!}{n! \alpha^n}} a_{2n} b_m z^{n+m} \\
&+ \sum_{n,m} \sqrt{\frac{(2n+1)!}{(n+1)! \alpha^{n+1}}} a_{2n+1} b_m P(z^m \overline{z}^{n+1}).
\end{align*}
\textbf{Step 1: Constant term.} The coefficient of $z^0$ yields
\[
\sum_{m=0}^\infty \sqrt{\frac{(2m+1)!}{\alpha^{2m+1}}} \sqrt{\frac{(m+1)!}{\alpha^{m+1}}} (a_{m+1} b_{2m+1} - a_{2m+1} b_{m+1}) = 0,
\]
which implies $\frac{b_{m+1}}{a_{m+1}} = \frac{b_{2m+1}}{a_{2m+1}}$ for all $m \geq 0$.\\
\textbf{Step 2: Linear term.} The coefficient of $z^1$ gives
\[
a_1 b_0 + \sum_{m=0}^\infty \sqrt{\frac{(2m+1)! \alpha}{(m+1)! \alpha^{2m+2}}} (m+2)! (a_{m+2} b_{2m+1} - a_{2m+1} b_{m+2}) = 0,
\]
enforcing $\frac{b_{m+2}}{a_{m+2}} = \frac{b_{2m+1}}{a_{2m+1}}$.
By induction, we conclude $\frac{b_{n+k}}{a_{n+k}} = \frac{b_{2n+1}}{a_{2n+1}}$ for all $n,k \geq 0$. Setting $n=0$ gives $\frac{b_k}{a_k} = \frac{b_1}{a_1}$ for all $k \geq 1$, proving $\psi = \lambda \phi$ where $\lambda = \frac{b_1}{a_1}$.
\end{proof}

\begin{proposition}\label{prop:basic}
For $\phi, \psi \in L^\infty(\mathbb{C})$ and $a, b \in \mathbb{C}$, the following hold:
\begin{enumerate}
\item[(a)] Linearity: $S_{a\phi + b\psi} = aS_\phi + bS_\psi$
\item[(b)] Boundedness: $\|S_\phi\| \leq \|\phi\|_\infty$
\end{enumerate}
\end{proposition}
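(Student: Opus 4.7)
The plan is to prove both parts directly from the definition $S_\phi = P M_\phi K$, relying only on the known properties of its three building blocks: $P$ is an orthogonal projection, $M_\phi$ is multiplication by an essentially bounded function, and $K$ is the bounded operator with $\|K\|=1$ introduced in the subsection on H-Toeplitz operators.

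For part (a), I would simply unwind the definition and invoke the linearity of multiplication in the symbol together with the linearity of the projection $P$:
\begin{align*}
S_{a\phi+b\psi}f &= P\,M_{a\phi+b\psi}(Kf) = P\bigl(a\phi\cdot Kf + b\psi\cdot Kf\bigr) \\
&= a\,P(\phi\cdot Kf) + b\,P(\psi\cdot Kf) = a S_\phi f + b S_\psi f,
\end{align*}
for every $f\in F_\alpha^2$, which yields the claimed operator identity.

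For part (b), I would chain three routine estimates. The orthogonal projection satisfies $\|P\|\le 1$; the multiplication operator on $L^2(\mathbb{C},d\lambda_\alpha)$ satisfies $\|M_\phi g\|_{L^2}\le \|\phi\|_\infty\,\|g\|_{L^2}$, via the pointwise bound $|\phi(z)|\le \|\phi\|_\infty$ almost everywhere; and $K$ is an isometry from $F_\alpha^2$ into $L^2(\mathbb{C},d\lambda_\alpha)$ (the relation $K^*K=I_{F_\alpha^2}$ gives $\|Kf\|_{L^2}=\|f\|_{F_\alpha^2}$). Chaining these gives
\[
\|S_\phi f\|_{F_\alpha^2} \le \|P\|\,\|M_\phi(Kf)\|_{L^2} \le \|\phi\|_\infty\,\|Kf\|_{L^2} = \|\phi\|_\infty\,\|f\|_{F_\alpha^2},
\]
and taking the supremum over unit vectors yields $\|S_\phi\|\le \|\phi\|_\infty$.

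There is no real obstacle here; the only point deserving a sentence of justification is the isometry property of $K$, which ultimately rests on verifying that the image family $\{e_n\}_{n\ge 0}\cup\{\overline{e_{n+1}}\}_{n\ge 0}$ is orthonormal in $L^2(\mathbb{C},d\lambda_\alpha)$. That verification reduces to Lemma \ref{lem:monomial-orthogonality} together with the vanishing of angular integrals showing $\langle z^k,\overline{z}^{\ell}\rangle_{L^2(d\lambda_\alpha)}=0$ whenever $k+\ell\ge 1$.
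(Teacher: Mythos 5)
Your proposal is correct and follows essentially the same route as the paper: unwind the definition $S_\phi = PM_\phi K$ and use linearity of $P$ and of symbol multiplication for (a), then chain $\|P\|\le 1$, $\|M_\phi\|\le\|\phi\|_\infty$, and $\|Kf\|=\|f\|$ for (b). Your explicit justification of the isometry of $K$ via $K^*K=I_{F_\alpha^2}$ is a small point the paper leaves implicit, but it is not a different argument.
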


\begin{proof}
(a) For any $f \in F_\alpha^2$, we compute:
\begin{align*}
S_{a\phi+b\psi}(f) &= PM_{a\phi+b\psi}k(f) \\
&= aP(\phi k(f)) + bP(\psi k(f)) \\
&= aS_\phi(f) + bS_\psi(f).
\end{align*}

(b) The operator norm satisfies:
\begin{align*}
\|S_\phi\| &= \sup_{\|f\|=1} \|PM_\phi k f\| \\
&\leq \sup_{\|f\|=1} \|M_\phi\|\|k f\| \\
&= \|\phi\|_\infty \sup_{\|f\|=1} \|f\| \\
&= \|\phi\|_\infty \,.
\end{align*}
\end{proof}


\begin{definition}
For a bounded linear operator $T$ on a reproducing kernel Hilbert space $\mathcal{H}$ with normalized reproducing kernels $\{k_z\}_{z\in\mathcal{H}}$, the \emph{Berezin transform} of $T$ is the function
\[
\widetilde{T}(z) = \langle Tk_z, k_z \rangle, \quad z \in \mathcal{H}.
\]
\end{definition}
For the Fock space $F_\alpha^2$, the normalized reproducing kernel is given by

\begin{lemma}
The normalized reproducing kernel for $F_\alpha^2$ has the explicit form
\begin{align*}
k_z(w) &= \frac{K_z(w)}{\|K_z\|} = e^{-\frac{\alpha|z|^2}{2}} e^{\alpha\overline{z}w} \\
&= e^{-\frac{\alpha|z|^2}{2}} \sum_{n=0}^\infty \frac{(\alpha\overline{z}w)^n}{n!}, \quad z, w \in \mathbb{C}.
\end{align*}
Moreover, $k_z \to 0$ weakly as $|z| \to \infty$.
\end{lemma}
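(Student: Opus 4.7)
The plan splits naturally into two pieces matching the two assertions of the lemma.

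For the explicit formula, I would begin with the reproducing kernel $K_z(w) = e^{\alpha \overline{z} w}$ recorded in the introduction and compute its norm via the reproducing property. Since $K_z \in F_\alpha^2$, we have
\[
\|K_z\|^2 = \langle K_z, K_z \rangle = K_z(z) = e^{\alpha|z|^2},
\]
so $\|K_z\| = e^{\alpha|z|^2/2}$. Dividing gives $k_z(w) = e^{-\alpha|z|^2/2}\, e^{\alpha \overline{z} w}$, and expanding the exponential as a Taylor series produces the claimed series representation. This step is essentially a one-line computation once the reproducing property is invoked.

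For the weak convergence statement, the plan is to combine the reproducing property with density of polynomials. For any $f \in F_\alpha^2$ the reproducing property yields
\[
\langle f, k_z \rangle = \frac{\langle f, K_z \rangle}{\|K_z\|} = f(z)\, e^{-\alpha|z|^2/2}.
\]
If $f = p$ is a polynomial of degree $N$, then $|p(z)|$ grows at most like $|z|^N$ while $e^{-\alpha|z|^2/2}$ decays faster than any polynomial, so $\langle p, k_z \rangle \to 0$ as $|z| \to \infty$. Since the monomials $\{e_n\}$ span a dense subspace of $F_\alpha^2$ and $\|k_z\| = 1$ uniformly in $z$, the general case follows by a standard three-epsilon argument: given $\varepsilon > 0$, pick a polynomial $p_\varepsilon$ with $\|f - p_\varepsilon\| < \varepsilon$ and estimate
\[
|\langle f, k_z \rangle| \leq |\langle f - p_\varepsilon, k_z \rangle| + |\langle p_\varepsilon, k_z \rangle| \leq \varepsilon + |p_\varepsilon(z)|\, e^{-\alpha|z|^2/2},
\]
then choose $|z|$ large enough to make the second term smaller than $\varepsilon$.

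I do not anticipate a serious obstacle here. The computation of $\|K_z\|$ is immediate from the reproducing identity, and the weak nullity is a textbook density argument once one observes that $f(z)e^{-\alpha|z|^2/2}$ is precisely the inner product $\langle f, k_z\rangle$. The only point requiring care is justifying that polynomials are dense in $F_\alpha^2$, which is standard and follows from the orthonormal basis $\{e_n\}$ already used throughout the paper.
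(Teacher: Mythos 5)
Your proposal is correct: the computation $\|K_z\|^2 = K_z(z) = e^{\alpha|z|^2}$ via the reproducing property, the resulting formula for $k_z$, and the density-plus-polynomial-decay argument for weak nullity are all sound, and the density of polynomials is indeed immediate from the orthonormal basis $\{e_n\}$. Note that the paper states this lemma without any proof, so there is nothing to compare against; your argument is the standard one and fills that gap correctly.
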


\begin{proposition}\label{prop:berezen_bound}
For any harmonic symbol $\psi \in L^\infty(\mathbb{C})$, the Berezin transform satisfies
\[
\|\widetilde{S_\psi}\|_\infty \leq \|\psi\|_\infty.
\]
\end{proposition}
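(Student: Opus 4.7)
The plan is to unpack the Berezin transform using the defining factorization $S_\psi = PM_\psi K$ and then estimate via Cauchy--Schwarz, after shifting the projection $P$ to the reproducing-kernel slot where it acts as the identity.

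First I would write, for each $z \in \mathbb{C}$,
\[
\widetilde{S_\psi}(z) = \langle S_\psi k_z, k_z \rangle_{F_\alpha^2} = \langle P M_\psi K k_z, k_z \rangle_{F_\alpha^2}.
\]
Since $P$ is the orthogonal projection of $L^2(\mathbb{C}, d\lambda_\alpha)$ onto $F_\alpha^2$ and $k_z \in F_\alpha^2$, self-adjointness of $P$ lets me move it to the other argument, giving
\[
\widetilde{S_\psi}(z) = \langle M_\psi K k_z, P k_z \rangle_{L^2} = \langle \psi \cdot (K k_z), k_z \rangle_{L^2}.
\]

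Next I would apply Cauchy--Schwarz in $L^2(\mathbb{C}, d\lambda_\alpha)$ together with the pointwise bound $|\psi(w)| \leq \|\psi\|_\infty$ to get
\[
|\widetilde{S_\psi}(z)| \leq \|\psi\|_\infty \, \|K k_z\|_{L^2} \, \|k_z\|_{L^2}.
\]
Then I would invoke the two normalizations already recorded in the paper: the operator $K: F_\alpha^2 \to L^2(\mathbb{C}, d\lambda_\alpha)$ has $\|K\| = 1$, and the normalized reproducing kernel satisfies $\|k_z\| = 1$ for all $z$. This immediately yields $|\widetilde{S_\psi}(z)| \leq \|\psi\|_\infty$, and taking the supremum over $z \in \mathbb{C}$ finishes the argument.

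I do not expect a serious obstacle here; the only subtlety worth stating explicitly is why $P$ may be dropped, which is just that $k_z \in F_\alpha^2$ combined with $P = P^* = P^2$. Note also that the harmonicity hypothesis on $\psi$ is not actually used in this bound, so the estimate is in fact valid for any bounded symbol, though I would phrase the proof to match the statement as given.
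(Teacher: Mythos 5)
Your proof is correct and follows essentially the same route as the paper: both arguments reduce to Cauchy--Schwarz together with the normalizations $\|k_z\|=1$, $\|K\|=1$, and $\|M_\psi\|=\|\psi\|_\infty$ (the paper simply bounds $|\langle S_\psi k_z,k_z\rangle|\le\|S_\psi\|\le\|M_\psi\|\,\|K\|$ without first moving $P$ across). Your closing observation that harmonicity of $\psi$ is never used is also consistent with the paper's argument.
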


\begin{proof}
For fixed $z \in \mathbb{C}$, we compute:
\begin{align*}
|\widetilde{S_\psi}(z)| &= |\langle S_\psi k_z, k_z \rangle| \\
&\leq \|S_\psi k_z\| \, \|k_z\| \\
&\leq \|S_\psi\| \\
&= \|PM_\psi K\| \\
&\leq \|M_\psi\| \, \|K\| \\
&= \|\psi\|_\infty.
\end{align*}
The last equality follows since $\|K\| = 1$ and $\|M_\psi\| = \|\psi\|_\infty$.
\end{proof}



\begin{lemma}\label{lem:dilation_action}
The dilation operator $K$ acts on normalized kernel as
\[
K(k_z)(w) = e^{-\frac{\alpha|z|^2}{2}} \left[ \sum_{n=0}^\infty \sqrt{\frac{\alpha^{3n}}{(2n)!n!}} \overline{z}^{2n} w^n + \sum_{n=0}^\infty \sqrt{\frac{\alpha^{3n+2}}{(2n+1)!(n+1)!}} \overline{z}^{2n+1} \overline{w}^{n+1} \right]  for \,\, z, w \in \mathbb{C}.
\]
\end{lemma}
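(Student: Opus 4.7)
The plan is to expand the normalized reproducing kernel $k_z$ in the orthonormal basis $\{e_n\}_{n\geq 0}$ of $F_\alpha^2$, split the expansion into its even- and odd-indexed parts, and then apply $K$ term-by-term using its definition on the basis. All sums converge in $L^2(\mathbb{C},d\lambda_\alpha)$ since $K$ is bounded with $\|K\|=1$, so this interchange of $K$ and the series is legitimate.

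First I would rewrite $k_z$ in terms of the basis $e_n(w)=\sqrt{\alpha^n/n!}\,w^n$. Starting from the series $k_z(w)=e^{-\alpha|z|^2/2}\sum_{n=0}^\infty \frac{(\alpha\overline{z}w)^n}{n!}$ given in the preceding lemma, the substitution $w^n=\sqrt{n!/\alpha^n}\,e_n(w)$ yields
\[
k_z(w)=e^{-\alpha|z|^2/2}\sum_{n=0}^\infty \frac{\alpha^{n/2}\,\overline{z}^{\,n}}{\sqrt{n!}}\,e_n(w).
\]
This is the only nontrivial computation and it is purely a matter of rearranging factorials and powers of $\alpha$.

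Next I would split the sum according to the parity of $n$, writing $n=2m$ or $n=2m+1$, and apply $K$ inside the series using
\[
K(e_{2m})(w)=\sqrt{\tfrac{\alpha^m}{m!}}\,w^m, \qquad K(e_{2m+1})(w)=\sqrt{\tfrac{\alpha^{m+1}}{(m+1)!}}\,\overline{w}^{\,m+1}.
\]
For the even part the scalar coefficient becomes $\frac{\alpha^m}{\sqrt{(2m)!}}\cdot\sqrt{\frac{\alpha^m}{m!}}=\sqrt{\frac{\alpha^{3m}}{(2m)!\,m!}}$, and for the odd part $\frac{\alpha^{m+1/2}}{\sqrt{(2m+1)!}}\cdot\sqrt{\frac{\alpha^{m+1}}{(m+1)!}}=\sqrt{\frac{\alpha^{3m+2}}{(2m+1)!\,(m+1)!}}$. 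Re-indexing $m$ as $n$ in each sum then gives exactly the claimed expression for $K(k_z)(w)$.

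The only subtlety — which I would flag but not belabour — is the justification for moving $K$ inside the infinite series. Since $k_z\in F_\alpha^2$ and its basis expansion converges in norm, and since $K$ is a bounded linear operator, term-by-term application is valid and the resulting series converges in $L^2(\mathbb{C},d\lambda_\alpha)$. There is no analytic obstruction; the proof is essentially a careful bookkeeping exercise in powers of $\alpha$ and factorials.
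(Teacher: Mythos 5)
Your proposal is correct and follows essentially the same route as the paper: expand $k_z$ in the orthonormal basis, split by parity, apply $K$ termwise, and collect the powers of $\alpha$ and factorials. Your added remark justifying the termwise application of $K$ via its boundedness is a small extra care the paper omits, but the computation is otherwise identical.
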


\begin{proof}
Using the orthonormal basis representation, we compute:
\begin{align*}
K(k_z)(w) &= e^{-\frac{\alpha|z|^2}{2}} \sum_{n=0}^\infty \frac{\alpha^n}{n!} \overline{z}^n K(e_n)(w) \\
&= e^{-\frac{\alpha|z|^2}{2}} \left[ \sum_{n=0}^\infty \sqrt{\frac{\alpha^{2n}}{(2n)!}} \overline{z}^{2n} e_n(w) + \sum_{n=0}^\infty \sqrt{\frac{\alpha^{2n+1}}{(2n+1)!}} \overline{z}^{2n+1} \overline{e_{n+1}}(w) \right] \\
&=\frac{1}{e^{\alpha \frac{|z|^2}{2}}} \left[\sum\limits^\infty_{n=0} \sqrt{\frac{\alpha^{2n}}{(2n)!}} \overline{z}^{2n} \sqrt{\frac{\alpha^n}{n!}} w^n+\sum\limits^\infty_{n=0} \sqrt{\frac{\alpha^{2n+1}}{(2n+1)!}} \overline{z}^{2n+1} \sqrt{\frac{\alpha^{n+1}}{(n+1)!}} \overline{w}^{n+1} \right]\cr
&=\frac{1}{e^{\alpha \frac{|z|^2}{2}}} \left[\sum\limits^\infty_{n=0} \sqrt{\frac{\alpha^{2n} \, \alpha^n}{(2n)! \, n!}} \overline{z}^{2n} w^n+\sum\limits^\infty_{n=0} \sqrt{\frac{\alpha^{2n+1}\, \alpha^{n+1}}{(2n+1)! \, (n+1)!}} \overline{z}^{2n+1} \overline{w}^{n+1} \right]\\
&= e^{-\frac{\alpha|z|^2}{2}} \left[ \sum_{n=0}^\infty \sqrt{\frac{\alpha^{3n}}{(2n)!n!}} \overline{z}^{2n} w^n + \sum_{n=0}^\infty \sqrt{\frac{\alpha^{3n+2}}{(2n+1)!(n+1)!}} \overline{z}^{2n+1} \overline{w}^{n+1} \right]
\end{align*}
which hold for all $w \in \mathbb{C}$.
This completes the proof of the lemma.
\end{proof}

\begin{definition}
For $z, w \in \mathbb{C}$, define the kernel function
\[
I(z,w) := K(k_z)(w).
\]
\end{definition}

The boundary behavior of $I(z,w)$ as $|z| \to \infty$ will be discussed in the subsequent analysis.

\begin{lemma}\label{lem:kernel_decay}
For each fixed $w \in \mathbb{C}$, the kernel function $I(z,w) \to 0$ as $|z| \to \infty$.
\end{lemma}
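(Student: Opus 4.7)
The plan is to split $I(z,w) = K(k_z)(w)$ into its holomorphic-in-$w$ and antiholomorphic-in-$w$ components and show that each tends to zero by means of a weak-convergence argument inside the Fock space $F_\alpha^2$.

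Using the explicit formula from Lemma \ref{lem:dilation_action}, I write $I(z,w) = f_z(w) + g_z(w)$, where $f_z$ denotes the first (holomorphic) series and $g_z$ denotes the second (antiholomorphic) series. Rewriting $w^n = \sqrt{n!/\alpha^n}\, e_n(w)$, the function $f_z$ becomes an element of $F_\alpha^2$ whose Fock coefficients are $c_n(z) = e^{-\alpha|z|^2/2}\sqrt{\alpha^{2n}/(2n)!}\,\overline{z}^{\,2n}$. Summing these squared moduli produces the telescoping identity $\|f_z\|^2 = e^{-\alpha|z|^2}\cosh(\alpha|z|^2) \le 1$, so the family $\{f_z\}_{z \in \mathbb{C}}$ is uniformly norm-bounded in $F_\alpha^2$.

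Next, for every fixed $n$ the coefficient $c_n(z)$ tends to zero as $|z|\to\infty$, since the Gaussian factor $e^{-\alpha|z|^2/2}$ overwhelms the polynomial $|z|^{2n}$. A standard Hilbert-space fact then applies: coefficient-wise convergence to zero on an orthonormal basis together with norm boundedness forces weak convergence to zero. Hence $f_z \to 0$ weakly in $F_\alpha^2$. Because the reproducing kernel $K_w$ belongs to $F_\alpha^2$ and $f_z(w) = \langle f_z, K_w\rangle$, this immediately yields $f_z(w) \to 0$.

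For $g_z$, I apply the same strategy to the complex conjugate $h_z := \overline{g_z}$, which is holomorphic in $w$ and therefore lives in $F_\alpha^2$ with Fock coefficients $d_m(z) = e^{-\alpha|z|^2/2}\sqrt{\alpha^{2m-1}/(2m-1)!}\,z^{2m-1}$ for $m \ge 1$. A parallel calculation gives $\|h_z\|^2 = e^{-\alpha|z|^2}\sinh(\alpha|z|^2) \le 1$, and each Fock coefficient again decays pointwise, so $h_z \to 0$ weakly and consequently $g_z(w) = \overline{h_z(w)} \to 0$. Combining the two limits yields $I(z,w) \to 0$. The main delicacy lies in the initial decomposition: although $K(k_z)$ itself is neither holomorphic nor antiholomorphic and therefore does not sit inside $F_\alpha^2$, each of its two summands (after conjugation of the antiholomorphic one) does, and this is precisely what permits the reproducing-kernel machinery of $F_\alpha^2$ to be invoked separately on each piece.
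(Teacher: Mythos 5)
Your proof is correct, and it takes a genuinely different --- and noticeably more careful --- route than the paper's. The paper simply displays the series from Lemma~\ref{lem:dilation_action} and asserts that the factor $e^{-\alpha|z|^2/2}$ ``dominates the polynomial growth in $z$''; but the bracketed expression is an \emph{infinite} series in $\overline{z}$ (for fixed $w$ it is an entire function of $\overline z$, not a polynomial), so some uniform control of the whole sum is genuinely required before the Gaussian can be allowed to win, and the paper does not supply it. Your argument provides exactly that control: identifying the holomorphic piece $f_z$ and the conjugated antiholomorphic piece $h_z=\overline{g_z}$ as elements of $F_\alpha^2$ with
\[
\|f_z\|^2=e^{-\alpha|z|^2}\cosh(\alpha|z|^2)\le 1,\qquad \|h_z\|^2=e^{-\alpha|z|^2}\sinh(\alpha|z|^2)\le 1,
\]
and then combining the uniform norm bound with coefficientwise decay to get weak convergence to $0$, hence pointwise convergence by pairing with $K_w$. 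The coefficient computations check out against the second displayed line of the proof of Lemma~\ref{lem:dilation_action}, and the observation that only the two separate pieces (not $K(k_z)$ itself) live in $F_\alpha^2$ is the right one. Two cosmetic points: ``telescoping identity'' is a misnomer for $\sum_n x^{2n}/(2n)!=\cosh x$, and it would be worth one sentence noting that the $F_\alpha^2$-norm limit of the coefficient series coincides with the pointwise sum (norm convergence implies pointwise convergence in a reproducing kernel space), but neither affects correctness. Net effect: your proof actually repairs a gap in the paper's one-line justification rather than merely reproducing it.
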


\begin{proof}
From Lemma \ref{lem:dilation_action}, we have the explicit representation:
\[
I(z,w) = e^{-\frac{\alpha|z|^2}{2}} \left[ \sum_{n=0}^\infty c_n \overline{z}^{2n} w^n + \sum_{n=0}^\infty d_n \overline{z}^{2n+1} \overline{w}^{n+1} \right],
\]
where $c_n = \sqrt{\frac{\alpha^{3n}}{(2n)!n!}}$ and $d_n = \sqrt{\frac{\alpha^{3n+2}}{(2n+1)!(n+1)!}}$. 

The exponential decay term $e^{-\alpha|z|^2/2}$ dominates the polynomial growth in $z$, ensuring that $I(z,w) \to 0$ as $|z| \to \infty$ for any fixed $w \in \mathbb{C}$.
\end{proof}

\begin{proposition}\label{prop:berezin_integral}
For $\phi \in L^\infty(\mathbb{C})$, the Berezin transform of $S_\phi$ admits the integral representation:
\[
\widetilde{S_\phi}(z) = e^{-\frac{\alpha|z|^2}{2}} \int_{\mathbb{C}} \phi(w) I(z,w) e^{\alpha z\overline{w}} d\lambda_\alpha(w).
\]
Moreover, $\widetilde{S_\phi}(z) \to 0$ as $|z| \to \infty$.
\end{proposition}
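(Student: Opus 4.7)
The integral representation is essentially bookkeeping, which I would dispatch first. Starting from $\widetilde{S_\phi}(z) = \langle S_\phi k_z, k_z\rangle_{F_\alpha^2}$, I would substitute $S_\phi = PM_\phi K$ and use the self-adjointness of $P$ together with $k_z \in F_\alpha^2$ to move $P$ off, obtaining $\widetilde{S_\phi}(z) = \langle M_\phi K k_z, k_z\rangle_{L^2(\mathbb{C}, d\lambda_\alpha)}$. Writing this $L^2$-pairing as an integral against $d\lambda_\alpha$ produces the integrand $\phi(w)(Kk_z)(w)\overline{k_z(w)}$. Substituting the definition $I(z,w) = (Kk_z)(w)$ and the explicit formula $\overline{k_z(w)} = e^{-\alpha|z|^2/2} e^{\alpha z \overline{w}}$, then pulling out the constant $e^{-\alpha|z|^2/2}$ from the integral, yields the stated identity.

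For the decay as $|z|\to\infty$, my plan combines three ingredients: the integral representation just derived, the pointwise kernel decay from Lemma \ref{lem:kernel_decay}, and the weak null convergence $k_z \to 0$ in $F_\alpha^2$. Pointwise in $w$ the integrand tends to zero, since $|(Kk_z)(w)| \to 0$ by Lemma \ref{lem:kernel_decay} and $|k_z(w)| = \exp(-\alpha|z-w|^2/2 + \alpha|w|^2/2) \to 0$ for every fixed $w$. I would localize by splitting the integral into $\{|w|\le R\}$ and $\{|w|>R\}$: on the compact piece both factors decay uniformly, so a Cauchy--Schwarz step combined with the unit $L^2$-norm identities $\|Kk_z\| = \|k_z\| = 1$ forces that contribution to vanish. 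For the tail I would invoke Theorem \ref{thm:main_compactness}, which gives a decomposition $S_\phi^* = K^* P M_{\overline{\phi}} + \mathcal{C}$ with $\mathcal{C}$ compact; then $\overline{\widetilde{S_\phi}(z)} = \langle K^* P M_{\overline{\phi}} k_z, k_z\rangle + \langle \mathcal{C} k_z, k_z\rangle$, the compact term vanishes by weak-to-norm convergence, and the remaining piece rewrites as $\langle T_{\overline{\phi}} k_z, P K k_z\rangle_{F_\alpha^2}$, whose asymmetric structure should provide the decay.

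The tail estimate is where I expect the main obstacle. A global Cauchy--Schwarz on the integral representation only reproduces the trivial bound $\|\phi\|_\infty$, because $\|Kk_z\|$ and $\|k_z\|$ are both identically $1$, so genuine decay cannot come from global $L^2$ estimates. The mechanism has to be structural: either a mismatch between the effective $w$-concentrations of $Kk_z$ and $k_z$---the expansion of $Kk_z$ places its mass on monomials $\overline{z}^{2n}w^n$ and $\overline{z}^{2n+1}\overline{w}^{n+1}$ whose $w$-saddles differ from those governing $k_z$---or oscillatory cancellation in $(Kk_z)(w)\overline{k_z(w)}$ once integrated against the Gaussian. A saddle-point analysis of the series defining $I(z,w)$ suggests that the unprefactored integral grows only like $\exp(c|z|^{4/3})$, strictly dominated by the external $\exp(-\alpha|z|^2/2)$; extending this from polynomial symbols to arbitrary $\phi \in L^\infty(\mathbb{C})$ is the delicate part, and I would handle it by combining the compactness decomposition above with the uniform operator bound $\|S_\phi\| \le \|\phi\|_\infty$ from Proposition \ref{prop:basic}.
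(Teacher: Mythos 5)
Your derivation of the integral representation is exactly the paper's computation: write $\widetilde{S_\phi}(z)=\langle PM_\phi Kk_z,k_z\rangle$, drop $P$ by self-adjointness since $k_z\in F_\alpha^2$, expand the $L^2(d\lambda_\alpha)$-pairing, and substitute $I(z,w)=Kk_z(w)$ and $\overline{k_z(w)}=e^{-\alpha|z|^2/2}e^{\alpha z\overline{w}}$. That half is correct and identical in approach.

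The decay statement is where the real issue lies, and to your credit your diagnosis is sharper than the paper's: the paper's entire argument is that the decay ``follows from Lemma \ref{lem:kernel_decay}'', i.e.\ from pointwise convergence $I(z,w)\to 0$ for fixed $w$, and you correctly observe that this cannot suffice because $\|I(z,\cdot)\|_{L^2(d\lambda_\alpha)}=\|k_z\|=1$ for every $z$, so global Cauchy--Schwarz only returns $\|\phi\|_\infty$. But your own plan does not close the gap. First, the split into $\{|w|\le R\}$ and $\{|w|>R\}$ attacks the wrong region: since $|k_z(w)|e^{-\alpha|w|^2/2}=e^{-\alpha|z-w|^2/2}$ concentrates near $w=z$, essentially the whole integral migrates into $\{|w|>R\}$ as $|z|\to\infty$; the compact piece vanishes trivially and the tail \emph{is} the problem. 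Second, your quantitative claim is misplaced: the bound $\exp(c|z|^{4/3})$ is what one gets from the series $\sum_n\sqrt{\alpha^{3n}/((2n)!\,n!)}\,|z|^{2n}|w|^{n}$ when $|w|$ stays in a fixed compact set (this is precisely Lemma \ref{lem:kernel_decay}); over the region $|w|\sim|z|$ where the Gaussian pairing actually lives, the growth is $\exp\bigl(3\cdot 2^{-5/3}\alpha|z|^2\bigr)$. The conclusion survives because $3\cdot 2^{-5/3}\approx 0.945<1$, but that margin has to be computed, not asserted. Third, the detour through Theorem \ref{thm:main_compactness} merely rewrites $\overline{\widetilde{S_\phi}(z)}$ as $\langle T_{\overline{\phi}}k_z,PKk_z\rangle$ plus a term killed by weak null convergence; the surviving pairing obeys the same trivial bound, as you concede, so nothing is gained --- and that theorem's proof in the paper rests on the claimed compactness of the embedding $F_\alpha^2\hookrightarrow L^2(d\lambda_\alpha)$, which is an isometry and hence not compact, so it is a fragile crutch. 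The correct mechanism is the one you gesture at: in the basis $\{e_n\}$ the coefficient sequences of $Kk_z$ and $k_z$ peak at $n\approx\alpha|z|^2/2$ and $n\approx\alpha|z|^2$ respectively, so their overlap is exponentially small; but neither your sketch nor the paper's one-line appeal to pointwise kernel decay turns this into a proof for general $\phi\in L^\infty(\mathbb{C})$.
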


\begin{proof}
We compute directly using the reproducing kernel property:
\begin{align*}
\widetilde{S_\phi}(z) &= \langle S_\phi k_z, k_z \rangle \\
&= \langle P M_\phi Kk_z, k_z \rangle \\
&= \langle \phi(w) I(z,w), k_z(w) \rangle \\
&= \int_{\mathbb{C}} \phi(w)\, I(z,w)\, \overline{k_z(w)}\, d\lambda_\alpha(w) \\
&= e^{-\frac{\alpha|z|^2}{2}} \int_{\mathbb{C}} \phi(w)\, I(z,w) e^{\alpha z\overline{w}} d\lambda_\alpha(w).
\end{align*}
The decay follows from Lemma \ref{lem:kernel_decay} and clearly we can see that \, $\widetilde{S_\phi}(z) \to 0$ as $|z| \to \infty$.
\end{proof}

\begin{remark}
The Hilbert-Schmidt criterion provides an alternative perspective:
\begin{itemize}
\item[(i)] Non-zero H-Toeplitz operators cannot be Hilbert-Schmidt on $F_\alpha^2$.
\item[(ii)] This follows from the direct computation of the Hilbert-Schmidt norm.
\end{itemize}
\end{remark}

\begin{theorem}
For any bounded harmonic function $\phi \in L^\infty(\mathbb{C})$, the H-Toeplitz operator $S_\phi$ is Hilbert-Schmidt on $F_\alpha^2$ if and only if $\phi \equiv 0$.
\end{theorem}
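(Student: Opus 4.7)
The plan is to compute $\|S_\phi\|_{HS}^2$ directly from the matrix representation of $S_\phi$ already established in Section 3. The backward direction is trivial: if $\phi \equiv 0$, then $S_\phi = 0$, which is of course Hilbert-Schmidt. For the forward direction, I would first split the sum defining the Hilbert-Schmidt norm over the orthonormal basis by the parity of the index:
\[
\|S_\phi\|_{HS}^2 = \sum_{n=0}^{\infty} \|S_\phi e_{2n}\|^2 + \sum_{n=0}^{\infty} \|S_\phi e_{2n+1}\|^2.
\]
The matrix entries computed earlier show that $\langle S_\phi e_{2n}, e_m\rangle = \langle T_\phi e_n, e_m\rangle$ and $\langle S_\phi e_{2n+1}, e_m\rangle = \langle H_\phi e_n, e_m\rangle$, so Parseval applied row-by-row reduces the expression to $\|S_\phi\|_{HS}^2 = \|T_\phi\|_{HS}^2 + \|H_\phi\|_{HS}^2$. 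In particular $\|T_\phi\|_{HS}^2 \leq \|S_\phi\|_{HS}^2$, so the Toeplitz part alone must be finite.

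Writing $\phi(z) = \sum_{i=0}^\infty a_i z^i + \sum_{j=1}^\infty b_j \overline{z}^j$ and substituting the Toeplitz matrix entries from the earlier theorem, I would collect terms by $k = m - n \geq 0$ for the analytic part and $k = n - m \geq 1$ for the anti-analytic part, obtaining
\[
\|T_\phi\|_{HS}^2 = \sum_{k=0}^\infty \frac{|a_k|^2}{\alpha^k} \sum_{n=0}^\infty \frac{(n+k)!}{n!} + \sum_{k=1}^\infty \frac{|b_k|^2}{\alpha^k} \sum_{n=0}^\infty \frac{(n+k)!}{n!}.
\]
The crucial observation is that for every fixed $k \geq 0$ the inner sum $\sum_{n=0}^\infty \frac{(n+k)!}{n!}$ diverges: it is the sum of the constant $1$ when $k = 0$, and the sum of the degree-$k$ polynomial $(n+1)(n+2)\cdots(n+k)$ in $n$ when $k \geq 1$. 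Consequently, each coefficient $|a_k|^2$ and $|b_k|^2$ must vanish individually, which forces $\phi \equiv 0$.

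The Hankel contribution $\|H_\phi\|_{HS}^2$ (which, by an analogous computation using the Hankel matrix entries, can be shown to equal $\sum_{k\geq 1} |a_k|^2 \frac{k!(2^k-1)}{\alpha^k}$) is not actually needed for the conclusion and would only re-confirm the vanishing of the analytic coefficients. The main (and only) obstacle in executing this plan is routine combinatorial bookkeeping: re-indexing the double sum over $(n,m)$ to group by the frequency $k$, and being careful that each pair is counted exactly once. No analytical subtlety such as interchange of limits or convergence of auxiliary series arises, since the argument hinges on the divergence of a single elementary polynomial series.
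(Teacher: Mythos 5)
Your proposal is correct and follows essentially the same route as the paper: both expand the Hilbert--Schmidt norm over the monomial basis, split it by parity of the index into a Toeplitz block and a Hankel block, and conclude from the divergence of the inner sums $\sum_{n}(n+k)!/n!$ that every coefficient $a_k$ and $b_k$ must vanish. One minor aside: finiteness of the Hankel block $\sum_{k\ge 1}|a_k|^2\,k!(2^k-1)/\alpha^k$ would \emph{not} by itself re-confirm $a_k=0$ (that single series can converge for nonzero coefficients), but since you correctly rely only on the Toeplitz block, this does not affect the argument.
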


\begin{proof}
($\Rightarrow$) Assume $S_\phi$ is Hilbert-Schmidt. Then its Hilbert-Schmidt norm must be finite

\begin{equation}\label{eq:HSnorm}
\sum_{n=0}^\infty \|S_\phi e_n\|^2 < \infty ,
\end{equation}
where $\{e_n\}_{n=0}^\infty$ is the standard orthonormal basis of $F_\alpha^2$. For a harmonic symbol $\phi(z) = \displaystyle\sum_{i=0}^\infty a_i z^i + \sum_{j=1}^\infty b_j \overline{z}^j$, we compute

\begin{align*}
\|S_\phi e_n\|^2 &= \|P M_\phi K e_n\|^2 \\
&= \begin{cases}
\|P M_\phi e_{n/2}\|^2 & \text{if } n \text{ even} \\
\|P M_\phi \overline{e_{(n+1)/2}}\|^2 & \text{if } n \text{ odd} .
\end{cases}
\end{align*}
Expanding these terms using the basis representation
\begin{align*}
\sum_{n=0}^\infty \|S_\phi e_n\|^2 &= \sum_{n=0}^\infty \|P M_\phi e_n\|^2 + \sum_{n=0}^\infty \|P M_\phi \overline{e_{n+1}}\|^2 \\
&= \sum_{n=0}^\infty \left\|P\left(\sum_{i=0}^\infty a_i z^i + \sum_{j=1}^\infty b_j \overline{z}^j\right)\sqrt{\frac{\alpha^n}{n!}}z^n\right\|^2 \\
&\quad + \sum_{n=0}^\infty \left\|P\left(\sum_{i=0}^\infty a_i z^i + \sum_{j=1}^\infty b_j \overline{z}^j\right)\sqrt{\frac{\alpha^{n+1}}{(n+1)!}}\overline{z}^{n+1}\right\|^2.
\end{align*}
Computing the projections yields
\begin{align*}
&= \sum_{n=0}^\infty \frac{\alpha^n}{n!} \left\|\sum_{i=0}^\infty a_i z^{i+n} + \sum_{j=1}^n b_j \frac{n!}{\alpha^j (n-j)!} z^{n-j}\right\|^2 \\
&\quad + \sum_{n=0}^\infty \frac{\alpha^{n+1}}{(n+1)!} \left\|\sum_{i=n+1}^\infty a_i \frac{i!}{\alpha^{n+1}(i-n-1)!} z^{i-n-1}\right\|^2.
\end{align*}
Calculating the norms explicitly
\begin{align*}
&= \sum_{n=0}^\infty \frac{\alpha^n}{n!} \left(\sum_{i=0}^\infty |a_i|^2 \frac{(i+n)!}{\alpha^{i+n}} + \sum_{j=1}^n |b_j|^2 \frac{(n!)^2}{\alpha^{2j}(n-j)!^2} \frac{(n-j)!}{\alpha^{n-j}}\right) \\
&\quad + \sum_{n=0}^\infty \frac{\alpha^{n+1}}{(n+1)!} \sum_{i=n+1}^\infty |a_i|^2 \frac{(i!)^2}{\alpha^{2n+2}(i-n-1)!^2} \frac{(i-n-1)!}{\alpha^{i-n-1}}.
\end{align*}
Simplifying the expressions
\begin{align*}
&= \sum_{n=0}^\infty \left(\sum_{i=0}^\infty |a_i|^2 \frac{(i+n)!}{n! \alpha^i} + \sum_{j=1}^n |b_j|^2 \frac{n!}{\alpha^{n+j}(n-j)!}\right) \\
&\quad + \sum_{n=0}^\infty \sum_{i=n+1}^\infty |a_i|^2 \frac{\alpha^{n+1} i!^2}{(n+1)! \alpha^{2n+2} (i-n-1)! \alpha^{i-n-1}}.
\end{align*}

For this sum to converge, each coefficient must vanish:

\begin{itemize}
\item[(a)] The $a_i$ terms require $\displaystyle\sum_{i=0}^\infty |a_i|^2 \frac{(i+n)!}{n! \alpha^i} < \infty$ for all $n$, which forces $a_i = 0$ for all $i \geq 0$.
\item[(b)] The $b_j$ terms require $\displaystyle\sum_{j=1}^\infty |b_j|^2 \frac{n!}{\alpha^{n+j}(n-j)!} < \infty$ for all $n$, which forces $b_j = 0$ for all $j \geq 1$.
\end{itemize}
Thus $\phi \equiv 0$.
($\Leftarrow$) The zero operator is trivially Hilbert-Schmidt.
\end{proof}

\section{Hyponormality and Normality of  Unbounded Symbols}\label{sec5}

To incorporate unbounded symbols, we define a growth-controlled space using the Mellin transform.

\begin{definition}
For \( b > 0 \), the space \( V_b \) is defined as:
\[
V_b = \left\{ f : \mathbb{C} \to \mathbb{C} \text{ measurable} \,\middle|\, \|f\|_{V_b} = \operatorname{ess\,sup}_{z \in \mathbb{C}} |f(z)| e^{-b|z|^2} < \infty \right\}.
\]
This norm controls exponential growth, ensuring that symbols in \( V_b \) satisfy \( |f(z)| \leq e^{b|z|^2} \) almost everywhere.
\end{definition}

\begin{definition}
For a measurable function \( f : (0, \infty) \to \mathbb{C} \), the Mellin transform is defined for \( s \in \mathbb{C} \) by:
\[
\mathcal{M}[f](s) = \int_{0}^{\infty} t^{s-1} f(t) \, dt,
\]
whenever the integral converges absolutely. For the Gaussian, we have:
\[
\mathcal{M}[e^{-t^2}](s) = \frac{1}{2} \Gamma\left( \frac{s}{2} \right), \quad \operatorname{Re}(s) > 0.
\]
\end{definition}

\begin{proposition}
If \( \phi \in V_b \) for some \( b < \alpha/2 \), then the H-Toeplitz operator \( S_\phi f = P(\phi \cdot K(f)) \) is a well-defined bounded linear operator on \( F_{\alpha}^{2} \).
\end{proposition}

\begin{proof}
Since \( \phi \in V_b \), we have \( |\phi(z)| \leq D e^{b|z|^2} \). For \( f \in F_{\alpha}^{2} \), \( K(f) \) is bounded, and \( \phi \cdot K(f) \in L^2(\mathbb{C}, d\lambda_{\alpha}) \) because \( b < \alpha/2 \) ensures integrability against \( e^{-\alpha|z|^2} \). The projection \( P \) is bounded, so \( S_\phi \) is well-defined and bounded.
\end{proof}


\subsection{Matrix Representation via Mellin Transform}

The action of \( S_\psi \) on the orthonormal basis \( \{e_n(z) = \sqrt{\frac{\alpha^n}{n!}} z^n\} \) is characterized using the Mellin transform.

\begin{theorem}
\label{thm_matrix_quasi}
Let \(\psi(z) = \psi_0(r)e^{ik\theta} \in V_b\) with \(b < \alpha/2\). Then the matrix elements of the H-Toeplitz operator \(S_\psi\) with respect to the orthonormal basis \(\{e_n(z) = \sqrt{\tfrac{\alpha^n}{n!}} z^n\}\) of the Fock space \(F_\alpha^2\) are given by
\[
\langle S_\psi e_n, e_m \rangle = \delta_{k, m-n} \cdot C(\alpha, n, m) \cdot \mathcal{M}[\psi_0(r) e^{-\tfrac{\alpha}{2}r^2}](m + n + 1),
\]
where \(\delta_{k, m-n}\) is the Kronecker delta,
\(
C(\alpha, n, m) = \sqrt{\frac{\alpha^{m+1}}{n!m!}} \frac{2\pi}{\alpha},
\)
and \(\mathcal{M}[\cdot]\) denotes the Mellin transform.
\end{theorem}

\begin{proof}We begin with the definition of the matrix element
\[
\langle S_\psi e_n, e_m \rangle = \langle P(\psi \cdot K(e_n)), e_m \rangle.
\]
Since \(P\) is the orthogonal projection onto \(F_\alpha^2\) and \(e_m \in F_\alpha^2\), this reduces to
\[
\langle S_\psi e_n, e_m \rangle = \langle \psi \cdot K(e_n), e_m \rangle = \int_{\mathbb{C}} \psi(z) \, K(e_n)(z) \, \overline{e_m(z)} \, d\lambda_\alpha(z),
\]
where \(d\lambda_\alpha(z) = \tfrac{\alpha}{\pi} e^{-\alpha |z|^2} dA(z)\).
Passing to polar coordinates \(z = re^{i\theta}\) with \(d\lambda_\alpha(z) = \tfrac{\alpha}{\pi} e^{-\alpha r^2} r\,dr\,d\theta\), and recalling that \(\psi(z) = \psi_0(r)e^{ik\theta}\), together with
\[
e_m(z) = \sqrt{\tfrac{\alpha^m}{m!}} r^m e^{im\theta}, \qquad \overline{e_m(z)} = \sqrt{\tfrac{\alpha^m}{m!}} r^m e^{-im\theta},
\]
and for even \(n=2p\),
\[
K(e_{2p})(z) = e_p(z) = \sqrt{\tfrac{\alpha^p}{p!}} r^p e^{ip\theta},
\]
we obtain
\[
\langle S_\psi e_{2p}, e_m \rangle = \sqrt{\tfrac{\alpha^p}{p!} \cdot \tfrac{\alpha^m}{m!}} \cdot \frac{\alpha}{\pi} \int_0^\infty \psi_0(r) r^{p+m+1} e^{-\alpha r^2}\,dr \int_0^{2\pi} e^{i(k+p-m)\theta}\,d\theta.
\]
The angular integral simplifies to
\[
\int_0^{2\pi} e^{i(k+p-m)\theta}\,d\theta = 2\pi \,\delta_{m,p+k},
\]
which immediately enforces the condition \(m=n+k\), yielding the Kronecker delta factor \(\delta_{k,m-n}\) in the general case.
It remains to evaluate the radial integral
\[
I = \int_0^\infty \psi_0(r) r^{2p+k+1} e^{-\alpha r^2} \, dr.
\]
Introduce the substitution \(u = \tfrac{\alpha}{2} r^2\), whence \(r = \sqrt{\tfrac{2u}{\alpha}}\) and \(dr = \tfrac{du}{\sqrt{2\alpha u}}\). This transforms the integral into
\[
I = \frac{2^{p+\tfrac{k}{2}}}{\alpha^{p+\tfrac{k}{2}+1}} \int_0^\infty \psi_0\!\left(\sqrt{\tfrac{2u}{\alpha}}\right) u^{p+\tfrac{k}{2}} e^{-2u}\,du.
\]
Recognizing the Mellin transform of \(\psi_0(r)e^{-\tfrac{\alpha}{2} r^2}\), we identify
\[
I = \frac{2^{p+\tfrac{k}{2}}}{\alpha^{p+\tfrac{k}{2}+1}} \, \mathcal{M}[\psi_0(r) e^{-\tfrac{\alpha}{2} r^2}](m+n+1).
\]
Combining this with the prefactors obtained earlier, we arrive at
\[
\langle S_\psi e_n, e_m \rangle = \delta_{k,m-n} \cdot \sqrt{\frac{\alpha^p}{p!}\cdot\frac{\alpha^m}{m!}} \cdot 2\alpha \cdot \frac{2^{p+\tfrac{k}{2}}}{\alpha^{p+\tfrac{k}{2}+1}} \, \mathcal{M}[\psi_0(r) e^{-\tfrac{\alpha}{2} r^2}](m+n+1).
\]
A straightforward algebraic simplification yields precisely
\[
\langle S_\psi e_n, e_m \rangle = \delta_{k,m-n} \cdot \sqrt{\frac{\alpha^{m+1}}{n!m!}} \frac{2\pi}{\alpha} \, \mathcal{M}[\psi_0(r) e^{-\tfrac{\alpha}{2} r^2}](m+n+1),
\]
which is the desired formula.
\end{proof}
This theorem shows that the matrix of \( S_\psi \) is sparse, with non-zero entries only on the \( k \)-th diagonal.

\subsection{Hyponormality and Normality of H-Toeplitz Operators}

If $\psi$ is quasi-homogeneous of angular frequency $k\in\mathbb{Z}$, i.e.
\(
\psi(re^{i\theta})=\psi_0(r)e^{ik\theta},
\)
then, by the matrix representation proved earlier, the matrix elements of $S_\psi$ with respect to $\{e_n\}_{n\ge0}$ take the form
\[
\langle S_\psi e_n,e_m\rangle=\delta_{k,m-n}\,A_{n,m},\qquad
A_{n,m}=C(\alpha,n,m)\,\mathcal{M}\!\big[\psi_0(r)e^{-\tfrac{\alpha}{2}r^2}\big](m+n+1),
\]
where
\[
C(\alpha,n,m)=\sqrt{\frac{\alpha^{\,m+1}}{n!\,m!}}\;\frac{2\pi}{\alpha},
\]
and $\mathcal{M}$ denotes the Mellin transform $\mathcal{M}[f](s)=\int_0^\infty r^{s-1}f(r)\,dr$.
For such a sparse matrix (nonzero entries occur only on the $k^{th}$-diagonal), the computation of $S_\psi^*S_\psi$ and $S_\psi S_\psi^*$ simplifies considerably, for every $n\ge0$ one finds
\[
\|S_\psi e_n\|^2=\sum_{m\ge0}|\langle S_\psi e_n,e_m\rangle|^2
=|A_{n,n+k}|^2,
\]
and (with the convention that indices outside $\{0,1,2,\dots\}$ give zero)
\[
\|S_\psi^* e_n\|^2=\sum_{m\ge0}|\langle S_\psi^* e_n,e_m\rangle|^2
=\sum_{m\ge0}|\overline{\langle S_\psi e_m,e_n\rangle}|^2
=|A_{\,n-k,n}|^2.
\]
Hence the self-commutator evaluated on basis vectors yields the diagonal values
\[
\big\langle (S_\psi^*S_\psi-S_\psi S_\psi^*)e_n,e_n\big\rangle
=|A_{n,n+k}|^2-|A_{\,n-k,n}|^2.
\]

\begin{theorem}
Let $\psi(re^{i\theta})=\psi_0(r)e^{ik\theta}$ be quasi-homogeneous with $b<\alpha/2$. 
Then $S_\psi$ is hyponormal if and only if
\[
|A_{n,n+k}|^2\;\ge\;|A_{\,n-k,n}|^2\qquad\text{for every }n\ge0,
\]
with the right-hand side interpreted as $0$ when $n-k<0$. Equivalently,
\[
\big|C(\alpha,n,n+k)\,\mathcal{M}[\psi_0 e^{-\tfrac{\alpha}{2}(\cdot)^2}](2n+k+1)\big|
\ge
\big|C(\alpha,n-k,n)\,\mathcal{M}[\psi_0 e^{-\tfrac{\alpha}{2}(\cdot)^2}](2n-k+1)\big|
\]
for all $n\ge0$.
\end{theorem}

\begin{proof}
Let $\psi(re^{i\theta})=\psi_0(r)e^{ik\theta}$ be quasi-homogeneous.  
From the matrix representation one has
\[
\langle S_\psi e_p,e_q\rangle=\delta_{k,q-p}\,A_{p,q},\qquad
A_{p,q}=C(\alpha,p,q)\,\mathcal{M}\!\big[\psi_0(r)\,e^{-\tfrac{\alpha}{2}r^2}\big](p+q+1).
\]
Thus the matrix of $S_\psi$ is supported on a single diagonal.
For each $n\ge0$, the norm of $S_\psi e_n$ is
\[
\|S_\psi e_n\|^2=\sum_{m\ge0}\big|\langle S_\psi e_n,e_m\rangle\big|^2
=|A_{n,n+k}|^2,
\]
with the value interpreted as $0$ if $n+k<0$.  
Similarly,
\[
\|S_\psi^* e_n\|^2=\sum_{m\ge0}\big|\langle S_\psi e_m,e_n\rangle\big|^2
=|A_{\,n-k,n}|^2,
\]
again vanishing when $n-k<0$.
For arbitrary $r,s\ge0$ 
\[
\langle S_\psi^*S_\psi e_r,e_s\rangle
=\sum_{p\ge0}\overline{\langle S_\psi e_p,e_r\rangle}\,\langle S_\psi e_p,e_s\rangle,
\qquad
\langle S_\psi S_\psi^* e_r,e_s\rangle
=\sum_{p\ge0}\langle S_\psi e_r,e_p\rangle\,\overline{\langle S_\psi e_s,e_p\rangle}.
\]
By the single-diagonal structure, these sums vanish unless $r=s$, in which case
\[
\langle S_\psi^*S_\psi e_r,e_r\rangle=|A_{r,r+k}|^2,\qquad
\langle S_\psi S_\psi^* e_r,e_r\rangle=|A_{\,r-k,r}|^2.
\]
Hence the self-commutator is diagonal in the chosen basis and satisfies
\[
\langle (S_\psi^*S_\psi-S_\psi S_\psi^*)e_n,e_m\rangle
=\delta_{n,m}\,\Big(|A_{n,n+k}|^2-|A_{\,n-k,n}|^2\Big).
\]
An operator is hyponormal if and only if its self-commutator is positive semi-definite. For the present operator this reduces to
\[
\langle (S_\psi^*S_\psi-S_\psi S_\psi^*)e_n,e_n\rangle\ge0
\quad\text{for all }n\ge0,
\]
which is equivalent to the family of inequalities
\[
|A_{n,n+k}|^2\;\ge\;|A_{\,n-k,n}|^2,\qquad n\ge0,
\]
with the right-hand side interpreted as zero whenever $n-k<0$.  
 Substituting the explicit expression for $A_{p,q}$ yields the Mellin-transform inequality stated in the theorem, completing the proof.
\end{proof}

\begin{theorem}
Let $\psi(re^{i\theta})=\psi_0(r)e^{ik\theta}$ be quasi-homogeneous, and let $S_\psi$ denote the associated H-Toeplitz operator. Then $S_\psi$ is normal  if and only if
\[
|A_{n,n+k}|=|A_{\,n-k,n}|\qquad\text{for all }n\ge0,
\]
with the convention that out of range indices contribute zero. In particular, if $k=0$ then the matrix of $S_\psi$ is diagonal, hence $S_\psi$ is normal; furthermore $S_\psi$ is self-adjoint precisely when each diagonal entry $A_{n,n}$ is real, a condition fulfilled for instance when $\psi_0$ is real-valued.
\end{theorem}

\begin{proof}
The single-diagonal structure of the matrix of $S_\psi$ implies
\[
S_\psi e_n = A_{n-k,n}\,e_{\,n-k}+A_{n+k,n}\,e_{\,n+k},
\qquad
S_\psi^* e_n = \overline{A_{n,n-k}}\,e_{\,n-k}+\overline{A_{n,n+k}}\,e_{\,n+k},
\]
where, by convention, terms with negative indices vanish. Consequently,
\[
\|S_\psi e_n\|^2 = |A_{n-k,n}|^2+|A_{n+k,n}|^2,
\qquad
\|S_\psi^* e_n\|^2 = |A_{n,n-k}|^2+|A_{n,n+k}|^2.
\]
If $S_\psi$ is normal, then $\|S_\psi e_n\|=\|S_\psi^* e_n\|$ for every $n\ge0$. This is equivalent to
\[
|A_{n-k,n}|^2+|A_{n+k,n}|^2=|A_{n,n-k}|^2+|A_{n,n+k}|^2.
\]
Since only one of the terms $A_{n\pm k,n}$ or $A_{n,n\pm k}$ is nonzero for fixed indices, the equality above reduces to
\[
|A_{n,n+k}|=|A_{\,n-k,n}|\qquad (n\ge0).
\]
Conversely, if this relation holds for all $n$, then the diagonal entries of $S_\psi^*S_\psi$ and $S_\psi S_\psi^*$ coincide, while all off-diagonal entries vanish identically by the banded structure of $S_\psi$. Hence $S_\psi$ is normal.

In the special case $k=0$, the only nonzero entries are the diagonal coefficients $A_{n,n}$, so $S_\psi$ is diagonal and therefore normal. Moreover, such a diagonal operator is self-adjoint if and only if each $A_{n,n}$ is real. This is automatically satisfied when $\psi_0$ is real-valued, since then the Mellin transform defining $A_{n,n}$ yields real numbers.
\end{proof}

\begin{example}
Taking $\psi(z)=z$, so $k=1$ and $\psi_0(r)=r$. The Mellin transform that appears in the matrix entries can be evaluated in closed form:
\[
\mathcal{M}\left[r e^{-\tfrac{\alpha}{2}r^2}\right](s)
= \int_0^\infty r^{s} e^{-\tfrac{\alpha}{2}r^2} \mathrm{d}r
= \frac{1}{2} \left(\frac{\alpha}{2}\right)^{-\frac{s+1}{2}} \Gamma\left(\frac{s+1}{2}\right).
\]
Hence the nonzero matrix entries are
\[
A_{n,n+1} = C(\alpha,n,n+1) \cdot \frac{1}{2} \left(\frac{\alpha}{2}\right)^{-\frac{2n+3}{2}} \Gamma\left(n + \frac{3}{2}\right),
\]
and
\[
A_{n-1,n} = C(\alpha,n-1,n) \cdot \frac{1}{2} \left(\frac{\alpha}{2}\right)^{-\frac{2n+1}{2}} \Gamma\left(n + \frac{1}{2}\right).
\]
A direct comparison (or a short computation using the Gamma recurrence) shows that for every $n \ge 0$
\[
|A_{n,n+1}| > |A_{n-1,n}|,
\]
so $S_\psi$ is hyponormal but not normal.
\end{example}

\begin{example}
If $k=0$ and $\psi(z) = \psi_0(|z|)$ is radial, then the matrix of $S_\psi$ is diagonal with diagonal entries
\[
A_{n,n} = C(\alpha,n,n) \cdot \mathcal{M}\left[\psi_0 e^{-\tfrac{\alpha}{2}(\cdot)^2}\right](2n+1).
\]
A diagonal operator with respect to an orthonormal basis is normal; hence every radial symbol produces a normal H-Toeplitz operator. If furthermore $\psi_0$ is real-valued, then every diagonal entry is real and $S_\psi$ is self-adjoint.
\end{example}

\begin{remark}
The Mellin transform provides a powerful analytical framework for studying H-Toeplitz operators with quasi-homogeneous symbols on Fock spaces. Our results establish that
\begin{enumerate}
\item Hyponormality is characterized by a Mellin transform inequality 
\item  Normality requires both radial symmetry ($k=0$) and real-valued symbol 
\end{enumerate}
These criteria are explicitly verifiable through the Mellin transform of the radial component $\psi_0(r)e^{-\frac{\alpha}{2}r^2}$ and the normalizing factor $C(\alpha,n,m)$. The examples demonstrate the practical application of these theoretical results, showing how specific symbol choices lead to operators with distinct spectral properties.
\end{remark}

\section{Graphical  analysis}\label{sec6}

With the introduction of the concept of \textit{H-Toeplitz operators on Fock space}, a novel and compelling intersection has emerged between \textit{Operator Theory} and \textit{Graph Theory}. The structural characteristics of these operators, when examined through the lens of their associated graphs, exhibit intricate patterns that encapsulate the underlying algebraic  properties of the operators. The graphical representations of H-Toeplitz operators on Fock space offer valuable insights into their functional behavior, thereby enriching the broader dialogue between operator-theoretic analysis and combinatorial graph structures.

This avenue of research draws on foundational developments in the theory of classical Toeplitz operators~\cite{va}, the formulation of directed Toeplitz graphs~\cite{ma}, and further generalizations such as the investigation of compactness and non-compactness in $k^{th}$-order slant Toeplitz graphs~\cite{mp}. These prior contributions have played a pivotal role in bridging the realms of functional analysis and discrete mathematics, providing the essential theoretical scaffolding for the current exploration of H-Toeplitz operators and their graphical counterparts within the Fock space framework.

In the subsequent discussion, we undertake a comparative graphical analysis of H-Toeplitz operators associated with symbols of varying analytic nature. Specifically, we consider cases where the symbol $\phi(z)$ is  purely non-analytic, purely analytic, and a combination of both analytic and non-analytic components.
This classification enables a clearer understanding of how the analytic structure of the symbol influences the graphical behavior of the corresponding operator.

\begin{definition}A directed H-Toeplitz graph \( W \) is defined as $W_n \langle x_1, x_2, \ldots, x_p; y_1, y_2, y_3, \ldots, y_q \rangle$
with vertex set \( \{1, 2, 3, \ldots, n\} \) is a digraph whose adjacency matrix is an H-Toeplitz matrix in which the arc \( (i, j) \) occurs if and only if 
\[
j = (2i - 1) + x_k \quad \text{or} \quad i = \frac{j + 1}{2} + y_l
\]
for some \( 1 \leq k \leq p \) and \( 1 \leq l \leq q \).
\end{definition}
The main diagonal of an H-Toeplitz adjacency matrix of order \( n \times n \) is labelled 0. The \( n - 1 \) distinct diagonals above the main diagonal are labelled \( 1, 2, \ldots, n - 1 \), and those below are also labeled \( 1, 2, \ldots, n - 1 \). Let \( x_1, x_2, \ldots, x_p \) be the upper diagonals containing ones and \( y_1, y_2, \ldots, y_q \) be the lower diagonals containing ones, such that $0 < x_1 < x_2 < \ldots < x_p < n$
and $0 < y_1 < y_2 < \ldots < y_q < n$. Then the corresponding H-Toeplitz operator graph will be denoted by $W_n \langle x_1, x_2, \ldots, x_p; y_1, y_2, y_3, \ldots, y_q \rangle$.
.
\begin{example}
Let $\phi({z})=2\overline{z}^1+3\overline{z}^2+\overline{z}^3$ and take $\alpha=1$ in the definition of $S_\phi$, then the matrix representation of $S_\phi$ with respect to the orthonormal basis $\left\{e_n \right\}^\infty_{n=0}$ of Fock space $F_\alpha^2$ is given by
\[ S_\phi =
\begin{pmatrix}
0 & 0 & 2 & 0 & 3 & 0 & 1 & 0 & 0 & \cdots \\
0 & 0 & 0 & 0 & 2 & 0 & 3 & 0 & 1 & \cdots \\
0 & 0 & 0 & 0 & 0 & 0 & 2 & 0 & 3 & \cdots \\
0 & 0 & 0 & 0 & 0 & 0 & 0 & 0 & 2 & \cdots \\
0 & 0 & 0 & 0 & 0 & 0 & 0 & 0 & 0 & \cdots \\
0 & 0 & 0 & 0 & 0 & 0 & 0 & 0 & 0 & \cdots \\
0 & 0 & 0 & 0 & 0 & 0 & 0 & 0 & 0 & \cdots \\
0 & 0 & 0 & 0 & 0 & 0 & 0 & 0 & 0 & \cdots \\
\vdots & \vdots & \vdots & \vdots & \vdots & \vdots & \vdots & \vdots & \ddots \\
\end{pmatrix}.
\]
The indicator binary matrix is given by 
\[ S_\phi =
\begin{pmatrix}
0 & 0 & 1 & 0 & 1 & 0 & 1 & 0 & 0 & \cdots \\
0 & 0 & 0 & 0 & 1 & 0 & 1 & 0 & 1 & \cdots \\
0 & 0 & 0 & 0 & 0 & 0 & 1 & 0 & 1 & \cdots \\
0 & 0 & 0 & 0 & 0 & 0 & 0 & 0 & 1 & \cdots \\
0 & 0 & 0 & 0 & 0 & 0 & 0 & 0 & 0 & \cdots \\
0 & 0 & 0 & 0 & 0 & 0 & 0 & 0 & 0 & \cdots \\
0 & 0 & 0 & 0 & 0 & 0 & 0 & 0 & 0 & \cdots \\
0 & 0 & 0 & 0 & 0 & 0 & 0 & 0 & 0 & \cdots \\
\vdots & \vdots & \vdots & \vdots & \vdots & \vdots & \vdots & \vdots & \ddots \\
\end{pmatrix}.
\]
Clearly, we can see that the above matrix is an upper triangular matrix in which all the diagonals are zero.
The corresponding H-Toeplitz graph of the above matrix will be \( W_{\infty}\langle 2,4,6;0\rangle\)  as given below

\begin{figure}[h]
\centering
\includegraphics[width=1\textwidth]{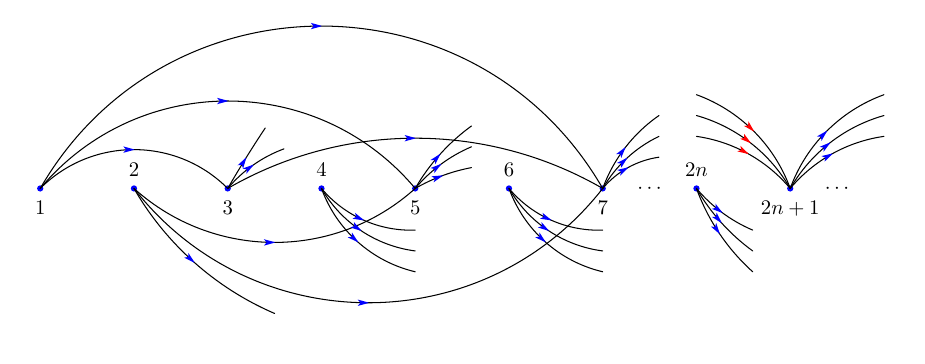}
\caption{\( W_{\infty}\langle 2,4,6;0\rangle\) }
\end{figure}
Consider the infinite digraph \( W_\infty \), where for any vertex \( n \), its out-degree are determined by the relation
\[
j = 2n + x_r - 1, \quad \text{with } x_r \in \{2, 4, 6\}.
\]
This generates the three outgoing edges $j = 2n + 1,  2n + 3, 2n + 5.$
Consequently, every vertex \( n \) has a constant out-degree of \( 3 \), resulting in the out-degree sequence:
\[
\text{Out-degree sequence: } (3, 3, 3, \dots).
\]
The indegree of a vertex \( n \) is derived by inverting the adjacency relation. The predecessors of \( n \) are obtained by solving for \( i \) in
$n = 2i - 1 + x_r, \quad x_r \in \{2, 4, 6\}.$
This yields $i = \frac{n - x_r + 1}{2}.$
For \( i \) to be a valid vertex, \( n - x_r + 1 \) must be a positive even integer. We analyze the indegree on a case-by-case basis
\begin{itemize}
 \begin{multicols}{5}
 \item[(a)] \(\text{indeg}(1) = 0\).
    \item[(b)] \(\text{indeg}(2) = 0\).
    \item [(c)]\(\text{indeg}(3) = 1\).
    \item [(d)]\(\text{indeg}(4) = 0\).
    \item[(e)] \(\text{indeg}(5) = 2\).   
   
\end{multicols}
\end{itemize}
For \( n \geq 6 \), the indegree depends on the parity of \( n \)
\[
\text{indeg}(n) = 
\begin{cases}
0 & \text{if } n \text{ is even}, \\
3 & \text{if } n \text{ is odd}.
\end{cases}
\]
Thus, the indegree sequence is
$(0, 0, 1, 0, 2, 0, 3, 0, 3, \dots).$ We can see there is no loop in the above digraph.
\end{example}
\begin{example} Let $\phi({z})= 5z + 9z^2 + z^4$ and take $\alpha=1$ in the definition of $S_\phi$, then the matrix representation of $S_\phi$ with respect to the orthonormal basis $\left\{e_n \right\}^\infty_{n=0}$ of Fock space $F_\alpha^2$ is given by
\[ S_\phi =
\begin{pmatrix}
0 & 5 & 0 & 9 & 0 & 0 & 0 & 1 & 0 & \cdots \\
5 & 9 & 0 & 0 & 0 & 1 & 0 & 0 & 0 & \cdots \\
9 & 0 & 5 & 1 & 0 & 0 & 0 & 0 & 0 & \cdots \\
0 & 1 & 9 & 0 & 5 & 0 & 0 & 0 & 0 & \cdots \\
1 & 0 & 0 & 0 & 9 & 0 & 5 & 0 & 0 & \cdots \\
0 & 0 & 1 & 0 & 0 & 0 & 9 & 0 & 5 & \cdots \\
0 & 0 & 0 & 0 & 1 & 0 & 0 & 0 & 9 & \cdots \\
0 & 0 & 0 & 0 & 0 & 0 & 1 & 0 & 0 & \cdots \\
\vdots & \vdots & \vdots & \vdots & \vdots & \vdots & \vdots & \vdots & \ddots \\
\end{pmatrix}.
\]
The indicator binary matrix is given by 
\[ S_\phi =
\begin{pmatrix}
0 & 1 & 0 & 1 & 0 & 0 & 0 & 1 & 0 & \cdots \\
1 & 1 & 0 & 0 & 0 & 1 & 0 & 0 & 0 & \cdots \\
1 & 0 & 1 & 1 & 0 & 0 & 0 & 0 & 0 & \cdots \\
0 & 1 & 1 & 0 & 1 & 0 & 0 & 0 & 0 & \cdots \\
1 & 0 & 0 & 0 & 1 & 0 & 1 & 0 & 0 & \cdots \\
0 & 0 & 1 & 0 & 0 & 0 & 1 & 0 & 1 & \cdots \\
0 & 0 & 0 & 0 & 1 & 0 & 0 & 0 & 1 & \cdots \\
0 & 0 & 0 & 0 & 0 & 0 & 1 & 0 & 0 & \cdots \\
\vdots & \vdots & \vdots & \vdots & \vdots & \vdots & \vdots & \vdots & \ddots \\
\end{pmatrix}.
\]
Clearly, we can see that the above matrix is not an upper triangular matrix.
The corresponding H-Toeplitz graph of the above matrix will be \( W_{\infty}\langle 1,3,7;1,2,4\rangle\)  as given below.
\begin{figure}[!ht]
\centering
\includegraphics[width=1\textwidth]{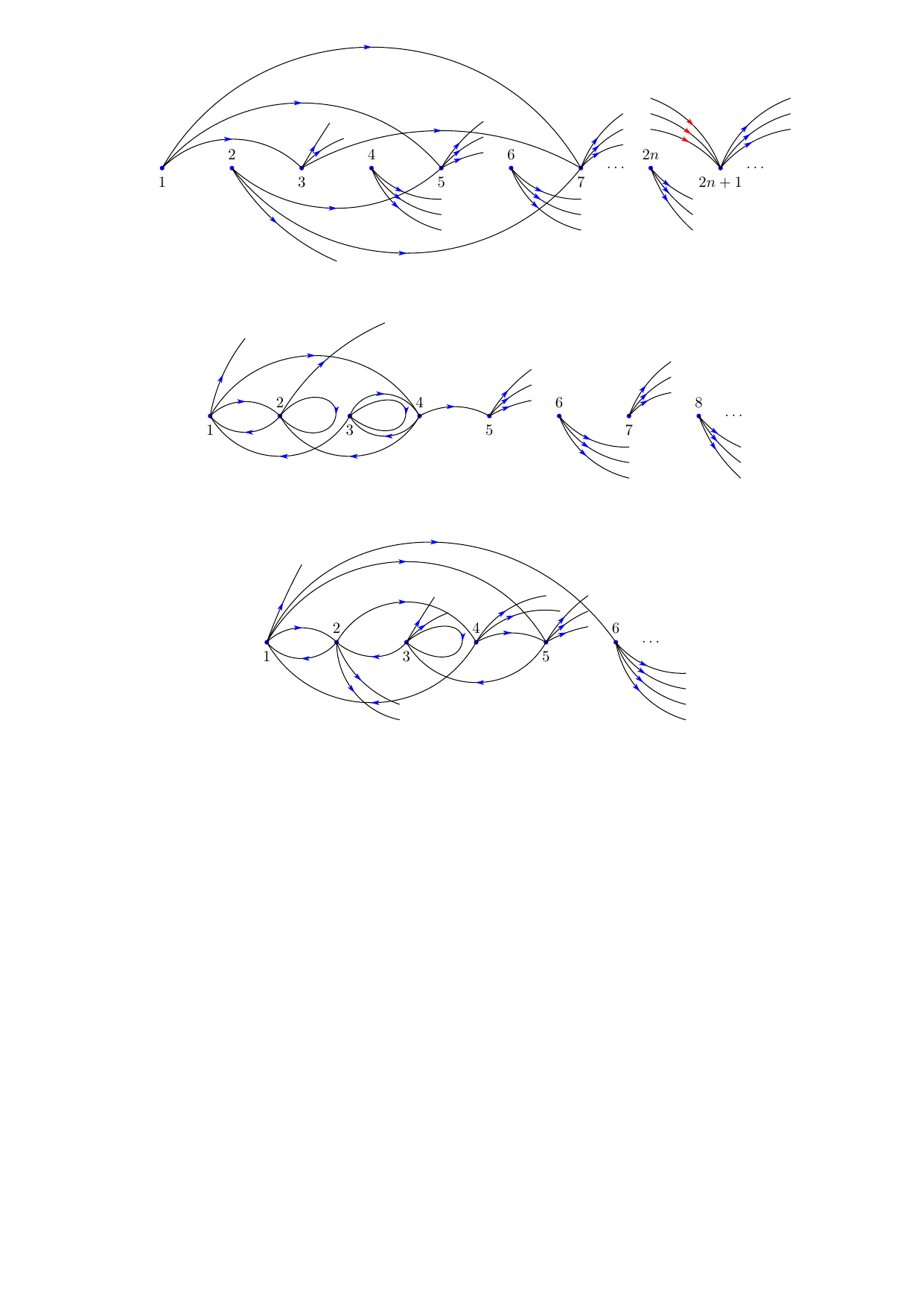}
\caption{\( W_{\infty}\langle 1,3,7;1,2,4\rangle\) }
\end{figure}

From the above figure it is clear that for any vertex $n$, the outdeg(n)=3.\\
Consequently, every vertex \( n \) has a constant outdegree of \( 3 \), resulting in the outdegree sequence:
\[
\text{Out-degree sequence: } (3, 3, 3, \dots).
\]
 We analyze the indegree on a case-by-case basis
\begin{itemize}
\begin{multicols}{5}
    \item[(a)] \(\text{indeg}(1) = 3\) 
    \item[(b)] \(\text{indeg}(2) = 3\) 
    \item [(c)]\(\text{indeg}(3) = 2\) 
    \item [(d)]\(\text{indeg}(4) = 2\) 
    \item[(e)] \(\text{indeg}(5) = 3\) 
    \item[(f)] \(\text{indeg}(6) = 1\) 
    \item[(g)] \(\text{indeg}(7) = 3\) 
    \item[(h)] \(\text{indeg}(8) = 1\) 
    \end{multicols}
\end{itemize}
For \( n \geq 9 \), the indegree depends on the parity of \( n \)
\[
\text{indeg}(n) = 
\begin{cases}
0 & \text{if } n \text{ is even}, \\
3 & \text{if } n \text{ is odd}.
\end{cases}
\]
Thus, the indegree sequence is
$(3, 3, 2, 2, 3, 1, 3, 1, 3, \dots).$ We can see that there is a loop present atleast the number of terms of analytic symbols in the above digraph.

\end{example}
\begin{example}
 Let $\phi({z})=4z + z^3 + \overline{z}^2+7\overline{z}^3 $ and take $\alpha=1$ in the definition of $S_\phi$, then the matrix representation of $S_\phi$ with respect to the orthonormal basis $\left\{e_n \right\}^\infty_{n=0}$ of Fock space $F_\alpha^2$ is given by 
 
\[ 
S_\phi =
\begin{pmatrix}
0 & 4 & 0 & 0 & 1 & 1 & 7 & 0 & 0 & \cdots \\
4 & 0 & 0 & 1 & 0 & 0 & 1 & 0 & 7 & \cdots \\
0 & 1 & 4 & 0 & 0 & 0 & 0 & 0 & 1 & \cdots \\
1 & 0 & 0 & 0 & 4 & 0 & 0 & 0 & 0 & \cdots \\
0 & 0 & 1 & 0 & 0 & 0 & 4 & 0 & 0 & \cdots \\
0 & 0 & 0 & 0 & 1 & 0 & 0 & 0 & 4 & \cdots \\
0 & 0 & 0 & 0 & 0 & 0 & 1 & 0 & 0 & \cdots \\
0 & 0 & 0 & 0 & 0 & 0 & 0 & 0 & 1 & \cdots \\
\vdots & \vdots & \vdots & \vdots & \vdots & \vdots & \vdots & \vdots & \ddots \\
\end{pmatrix}.
\]
The indicator binary matrix is given by 
\[ 
S_\phi =
\begin{pmatrix}
0 & 1 & 0 & 0 & 1 & 1 & 1 & 0 & 0 & \cdots \\
1 & 0 & 0 & 1 & 0 & 0 & 1 & 0 & 1 & \cdots \\
0 & 1 & 1 & 0 & 0 & 0 & 0 & 0 & 1 & \cdots \\
1 & 0 & 0 & 0 & 1 & 0 & 0 & 0 & 0 & \cdots \\
0 & 0 & 1 & 0 & 0 & 0 & 1 & 0 & 0 & \cdots \\
0 & 0 & 0 & 0 & 1 & 0 & 0 & 0 & 1 & \cdots \\
0 & 0 & 0 & 0 & 0 & 0 & 1 & 0 & 0 & \cdots \\
0 & 0 & 0 & 0 & 0 & 0 & 0 & 0 & 1 & \cdots \\
\vdots & \vdots & \vdots & \vdots & \vdots & \vdots & \vdots & \vdots & \ddots \\
\end{pmatrix}.
\]
Clearly, we can see that the above matrix is not an upper triangular matrix.
The corresponding H-Toeplitz graph of the above matrix will be \( W_{\infty}\langle 1,4,5,6;1,3\rangle\)  as given below

\begin{figure}[!htbp]
\centering
\includegraphics[width=1\textwidth]{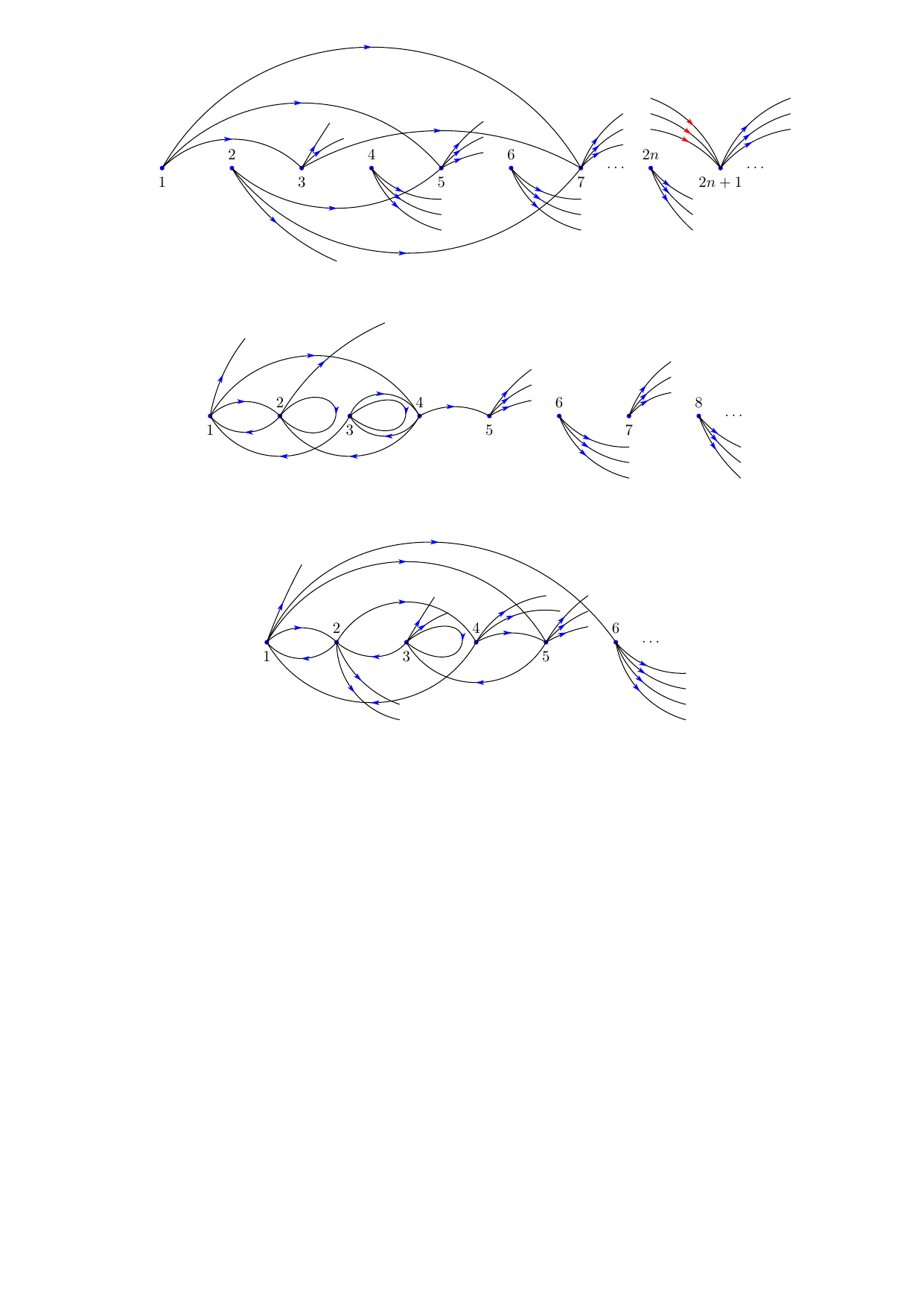}
\caption{\( W_{\infty}\langle 1,4,5,6;1,3\rangle\) }
\end{figure}

From the above figure it is clear that for any vertex $n$, the outdeg(n)=4 .
\end{example}
\subsection{ Comparative Graph Analysis}

The graphical representations of H-Toeplitz operators on the Fock space provide a compelling means of analyzing the underlying structure of these operators through directed adjacency matrices. This study considered three distinct classes of symbols: purely analytic, purely non-analytic, and a mixture of both, each yielding characteristic patterns in the associated graphs.

\begin{itemize}
 \item[(a)] \textbf{Purely non-analytic symbols} such as those involving only anti-analytic terms like $\overline{z}, \overline{z}^2, \ldots$ etc. yield \emph{upper triangular} adjacency matrices in their associated H-Toeplitz graphs. A key structural feature of these matrices is that the main diagonal is identically zero, reflecting the absence of self-loops in the graph. 

 \item[(b)] \textbf{Purely analytic symbols}  comprising terms such as $z, z^2, z^3, \ldots$ etc. yield H-Toeplitz adjacency matrices that are generally \emph{non-triangular}. Unlike the strictly upper triangular structure seen in the non-analytic case, the presence of analytic monomials introduces non-zero entries along and below the main diagonal. As illustrated by the graph $W_\infty\langle 2,4,6; 0\rangle$, each vertex admits a fixed number of outgoing edges, dictated by the degrees of the analytic terms. Notably, such graphs may contain self-loops, and the number of loops is at least equal to the number of analytic terms in the symbol.

   \item[(c)] \textbf{Mixed analytic and non-analytic symbols} involving both analytic terms (such as $z, z^2, \ldots$ etc.) and anti-analytic terms (such as $\overline{z}, \overline{z}^2, \ldots$  etc.) produce H-Toeplitz adjacency matrices that are generally \emph{non-triangular}. These matrices exhibit non-zero entries both above and below the main diagonal, reflecting the dual nature of the symbol. The corresponding graphs demonstrate a blend of forward and backward edge flows, often leading to complex patterns with overlapping arcs and enhanced connectivity. As observed in examples such as $W_\infty\langle 1,4,5,6; 1,3\rangle$, these graphs feature multiple self-loops and dense adjacency relations, with both the indegree and outdegree sequences showing irregular but interpretable behavior.
\end{itemize}

From the comparative perspective, the out-degree sequences remain uniformly bounded and often constant across all cases due to the fixed number of terms in the symbol. However, the indegree sequences and loop structures vary significantly and serve as key indicators of the symbol's analytic makeup. These findings highlight the power of graph-theoretic tools in understanding operator-theoretic phenomena in the Fock space and open new avenues for analyzing more generalized or weighted symbol cases in future research.

\section{Conclusion}\label{sec8}
This study establishes a comprehensive framework for analyzing H-Toeplitz operators on the Fock space $F_{\alpha}^{2}$. We have derived their explicit matrix representations, which reveal a hybrid structure intertwining features of classical Toeplitz and Hankel operators. Our analysis of their algebraic properties culminated in a precise characterization of commutativity for operators with harmonic symbols and a definitive proof that no non-zero H-Toeplitz operator can be a Hilbert-Schmidt operator, underscoring their inherent unboundedness in this norm.A major contribution lies in the spectral analysis of these operators. By leveraging the Mellin transform, we obtained complete characterizations of hyponormality and normality for H-Toeplitz operators with quasi-homogeneous symbols. We established that hyponormality is equivalent to a specific inequality between Mellin transforms, while normality requires a precise equality condition that holds automatically for radial symbols. A central and novel contribution of this work is the introduction of directed H-Toeplitz graphs. This innovative approach translates the matrix data of an operator into a directed graph, allowing us to visualize and classify operators based on their symbols.

In conclusion, our findings significantly advance the theoretical understanding of H-Toeplitz operators by resolving fundamental questions about their commutativity, compactness, and spectral properties (hyponormality and normality), while also pioneering a new dialogue between operator theory on analytic function spaces and graph theory. This bridge opens up promising avenues for future research, including the analysis of weighted symbols, the spectral properties of these operators, and further exploration of the rich combinatorial patterns embedded within their graphical representations.

\section{Declarations}\label{sec9}

The third author gratefully acknowledges support from the UGC Research Grant No. F.82-44/2020(SA-III), Sr. No. 201610157825, as well as from the CSIR Grant No. 09/0476(15236)/2022-EMR-I. The fourth author is supported by CSIR Grant No. 09/0476(15237)/2022-EMR-I. The authors declare that they have no conflict of interest or competing interests, financial or non-financial, with respect to the subject matter or materials discussed in this manuscript. All authors have contributed equally to this work.

\end{document}